\pdfoutput=1
\RequirePackage{ifpdf}
\ifpdf 
\documentclass[pdftex]{sigma}
\else
\documentclass{sigma}
\fi

\numberwithin{equation}{section}

\newtheorem{Theorem}{Theorem}[section]
\newtheorem{cor}[Theorem]{Corollary}
\newtheorem{lem}[Theorem]{Lemma}

\newtheorem{thm}[Theorem]{Theorem}

 { \theoremstyle{definition}
\newtheorem{defi}[Theorem]{Definition}
\newtheorem{exa}[Theorem]{Example}
\newtheorem{rem}[Theorem]{Remark} }

\newtheorem{thmRoman}{Theorem}

\newcommand{\N}{\mathbf{N}}
\DeclareMathOperator{\Irr}{Irr}
\newcommand{\fS}{{\mathfrak{S}}}

\begin{document}

\allowdisplaybreaks

\newcommand{\arXivNumber}{1705.08655}

\renewcommand{\thefootnote}{}

\renewcommand{\PaperNumber}{070}

\FirstPageHeading

\ShortArticleName{Restriction of Odd Degree Characters of $\mathfrak{S}_n$}

\ArticleName{Restriction of Odd Degree Characters of $\boldsymbol{\mathfrak{S}_n}$\footnote{This paper is a~contribution to the Special Issue on the Representation Theory of the Symmetric Groups and Related Topics. The full collection is available at \href{https://www.emis.de/journals/SIGMA/symmetric-groups-2018.html}{https://www.emis.de/journals/SIGMA/symmetric-groups-2018.html}}}

\Author{Christine BESSENRODT~$^\dag$, Eugenio GIANNELLI~$^\ddag$ and J{\o}rn B.~OLSSON~$^{\S}$}

\AuthorNameForHeading{C.~Bessenrodt, E.~Giannelli and J.B.~Olsson}

\Address{$^\dag$~Institute for Algebra, Number Theory and Discrete Mathematics,\\
 \hphantom{$^\dag$}~Leibniz Universit\"at Hannover, Welfengarten 1, D-30167 Hannover, Germany}
\EmailD{\href{mailto:bessen@math.uni-hannover.de}{bessen@math.uni-hannover.de}}

\Address{$^\ddag$~Department of Pure Mathematics and Mathematical Statistics, University of Cambridge, \\
\hphantom{$^\ddag$}~Cambridge CB3 0WA, United Kingdom}
\EmailD{\href{mailto:eg513@cam.ac.uk}{eg513@cam.ac.uk}}

\Address{$^{\S}$~Department of Mathematical Sciences, University of Copenhagen,\\
\hphantom{$^{\S}$}~DK-2100 Copenhagen \O, Denmark}
\EmailD{\href{mailto:olsson@math.ku.dk}{olsson@math.ku.dk}}

\ArticleDates{Received May 25, 2017, in f\/inal form August 30, 2017; Published online September 05, 2017}

\Abstract{Let $n$ and $k$ be natural numbers such that $2^k < n$. We study the restriction to $\mathfrak{S}_{n-2^k}$ of odd-degree irreducible characters of the symmetric group~$\mathfrak{S}_n$. This analysis completes the study begun in~[Ayyer~A., Prasad~A., Spallone~S., \textit{S\'em. Lothar. Combin.} \textbf{75} (2015), Art.~B75g, 13~pages] and recently developed in~[Isaacs~I.M., Navarro~G., Olsson~J.B., Tiep~P.H., \textit{J.~Algebra} \textbf{478} (2017), 271--282].}

\Keywords{characters of symmetric groups; hooks in partitions}

\Classification{20C30; 05A17}

\section{Introduction}\label{sec:intro}
Let $n$ be a natural number, and let $\chi$ be an irreducible character of odd degree of the symmetric group $\fS_n$. Then there exists a unique odd-degree irreducible constituent of the restriction $\chi_{\fS_{n-1}}$. This interesting fact was discovered recently in~\cite{APS}. The result had immediate applications in the study of natural correspondences of characters of f\/inite groups (see for example~\cite{GKNT}). In \cite[Theorem~A]{INOT} the result mentioned above was generalized, by showing that given any $k\in\mathbb{N}$ such that $2^k < n$, there exists a unique odd-degree irreducible constituent $f_k^n(\chi)$ of $\chi_{\fS_{n-2^k}}$ appearing with odd multiplicity. The main goal of this article is to study for all $n,k\in\mathbb{N}$ the map
\begin{gather*}
f_k^n\colon \ \Irr_{2'}(\fS_n)\longrightarrow\Irr_{2'}(\fS_{n-2^k}),
\end{gather*}
naturally def\/ined by Theorem~A of~\cite{INOT}. All our results are proved using a description of $f_k^n$ in terms of the natural partition labels of the involved irreducible characters.

Before describing the main results of this paper, we introduce some vocabulary. If~$2^k$ appears in the binary expansion of $n$ we say that~$2^k$ is a {\it binary digit} of~$n$. Similarly we say that two natural numbers~$m$ and~$n$ are {\it $2$-disjoint} if they do not have any common binary digit. On the other hand, if $m\le n$ and all the binary digits of $m$ appear in the binary expansion of~$n$, then we say that $m$ is a {\it binary subsum} of~$n$. This will be
denoted by $m\subseteq_2 n$. Let $\nu_2(n)$ be the exponent of the highest power of~2 dividing the integer~$n$.

A question raised in \cite{INOT} may be phrased as: \textit{For which $n$ and $k$ is $f^n_k$ surjective?} The authors showed that $f^n_k$ is surjective whenever $2^k$ is a binary digit of~$n$, and they observed that otherwise~$f^n_k$ could be both surjective or not (see \cite[Proposition~4.5 and Remark~4.6]{INOT}). In this paper we answer the question of surjectivity completely with the following result.

\begin{thmRoman}\label{TheoremA}
Let $n\in\mathbb{N}$, $k\in\mathbb{N}_0$ be such that $2^k<n$. Let $ d(n,k)=\nu_2\big(\big\lfloor \frac{n}{2^k}\big\rfloor\big)$.
\begin{itemize}\itemsep=0pt
\item If $k=0$ then $f^n_k$ is surjective if and only $d(n,k) \le 2$.
\item If $k>0$ then $f^n_k$ is surjective if and only $d(n,k) \le 1$.
\end{itemize}
\end{thmRoman}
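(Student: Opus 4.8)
The plan is to work entirely with the combinatorial description of $f^n_k$ on partition labels, using the parametrization of $\Irr_{2'}(\fS_m)$ by $2$-core towers. By Macdonald's theorem, $\chi^\lambda\in\Irr(\fS_m)$ has odd degree if and only if, writing $m=\sum_i a_i2^i$ in binary, the $2$-core tower of $\lambda$ carries exactly $a_i$ boxes on level $i$, all sitting in a single node of that level. Thus $\Irr_{2'}(\fS_m)$ is in bijection with the ways of choosing, for each binary digit $2^i$ of $m$, one of the $2^i$ nodes on level $i$; in particular $|\Irr_{2'}(\fS_m)|=2^{A(m)}$, where $A(m)=\sum_{i:\,a_i=1}i$.

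For the "only if" direction I would first extract as much as possible from a cardinality count. Writing $d=d(n,k)$, levels $k,\dots,k+d-1$ are empty in $n$ while level $k+d$ carries a digit; subtracting $2^k$ turns these into digits on levels $k,\dots,k+d-1$ and clears level $k+d$. A direct computation then gives $\Delta:=A(n-2^k)-A(n)=k(d-1)+\tfrac12 d(d-3)$. Since a surjection forces $|\Irr_{2'}(\fS_{n-2^k})|\le|\Irr_{2'}(\fS_n)|$, i.e.\ $\Delta\le 0$, one reads off that $f^n_k$ is not surjective precisely on the pairs with $\Delta>0$, namely $k=0,\ d\ge4$, or $k\ge1,\ d\ge3$, or $k\ge2,\ d=2$. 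This already yields non-surjectivity for all the claimed cases except the two boundary families $k=0,\ d=3$ and $k=1,\ d=2$, where $\Delta=0$ and the two character sets have equal size.

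In those two boundary families it therefore suffices to prove that $f^n_k$ fails to be injective. Here I would use the explicit form of the map on tower labels: the digit of $n$ on level $k+d$, recorded as an address $x\in\{0,1\}^{k+d}$, is redistributed by $f^n_k$ into addresses on levels $k,\dots,k+d-1$, while the choices coming from all other digits of $n$ are carried along unchanged. I would pin down this redistribution rule and exhibit two distinct addresses $x\ne x'$ producing the same family of lower-level addresses; the resulting collision, combined with the cardinality equality, forces non-surjectivity. For the "if" direction, when $d=0$ the map is simply the projection forgetting the chosen node on level $k$, which is visibly surjective and recovers the known case where $2^k$ is a binary digit of $n$. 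For the remaining admissible pairs ($d=1$ with any $k$, and additionally $d=2$ when $k=0$), given any target $\mu$ with its prescribed boxes on levels $k,\dots,k+d-1$ I would invert the redistribution rule to build a level-$(k+d)$ address $x$ mapping to it, assemble $\lambda$ from $x$ together with the untouched digits of $n$, and verify that $\lambda$ is a genuine partition with $\chi^\lambda$ odd and $f^n_k(\chi^\lambda)=\chi^\mu$.

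The main obstacle is understanding the redistribution rule precisely enough to settle its injectivity in the two $\Delta=0$ boundary families and its surjectivity in the admissible cases with $d\ge1$; once this single-digit analysis is controlled, everything else reduces to the count encoded in $\Delta$.
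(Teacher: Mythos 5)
Your opening count is correct and genuinely different from the paper's route: with $A(m)=\sum_{2^i\subseteq_2 m} i$ one has $|\Irr_{2'}(\fS_m)|=2^{A(m)}$ by Macdonald's theorem, your formula $\Delta=A\big(n-2^k\big)-A(n)=k(d-1)+\tfrac12 d(d-3)$ is right, and pure cardinality already rules out surjectivity for $k=0$, $d\ge4$, for $k\ge 1$, $d\ge 3$, and for $k\ge2$, $d=2$; the paper never counts, but instead exhibits an explicit partition outside the image in each non-surjective case (e.g.\ $(n-5,2,2)$ for $k=0$, $d\ge 3$). The gap is in everything after that, because the ``redistribution rule'' on which all your remaining cases rest is false, not merely unproven. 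Your rule says $f^n_k$ rewrites the level-$(k+d)$ address into addresses on levels $k,\dots,k+d-1$ while ``the choices coming from all other digits of $n$ are carried along unchanged''; this would force any two members of one fibre of $f^n_k$ to agree on the data attached to every binary digit of $n$ other than $2^{k+d}$. The smallest case already kills this. Take $n=6$, $k=0$, so $d=1$ and the untouched digit is $4$: the fibre of $f^6_0$ over $(3,2)$ is $\{(4,2),(3,3)\}$ (the third partition containing $(3,2)$, namely $(3,2,1)$, has even degree $16$), yet with a common convention (beta-sets $\{7,4,1,0\}$ and $\{6,5,1,0\}$, runners read modulo $4$) one gets $Q_4\big((4,2)\big)=\big((0),(0),(0),(1)\big)$ and $Q_4\big((3,3)\big)=\big((0),(0),(1),(0)\big)$: the digit-$4$ box sits at different nodes, and no choice of labelling convention can repair this, since a change of convention permutes the slots of all partitions of $6$ simultaneously. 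The same entanglement occurs throughout (for $n=12$, $k=0$, $d=2$ the fibre over $(7,4)$ is $\{(8,4),(7,5)\}$, whose digit-$8$ boxes lie in slots $7$ and $6$ of the respective $8$-quotients), and your parallel claim for $d=0$, that $f_k$ is ``simply the projection forgetting the chosen node on level $k$'', fails too: for $n=6$, $k=1$ the fibre over $(4)$ is $\{(6),(4,2)\}$, and these carry the digit-$4$ box at different nodes. The map simply does not factor as (redistribution on one digit) $\times$ (identity elsewhere).

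Consequently both arguments you build on the rule collapse: the collision $x\ne x'$ you would exhibit for the boundary families $(k,d)=(0,3),(1,2)$ does not produce two preimages of a single target, and the candidate you would ``assemble from $x$ together with the untouched digits of $n$'' for $d=1$ and $(k,d)=(0,2)$ need not map to the intended target --- in the $n=6$ example above, of the two odd partitions of $6$ sharing the target's digit-$4$ node, one maps to $(3,2)$ and the other to $(1^5)$, and nothing in your construction distinguishes them. What is true, and is the actual content of the theorem, is a statement about images rather than about the map: $\lambda\vdash_o n-2^k$ has a preimage if and only if the boxes on levels $k,\dots,k+d-1$ of its tower lie in a single quotient component and assemble to a partition of $2^d-1$ which is a hook. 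This is the paper's $d$-goodness criterion (Theorem~\ref{thm5:image:reform}), whose proof requires the Ayyer--Prasad--Spallone theorem (Lemma~\ref{ext:1}) as the $k=0$ atom, transported to quotient components via Lemma~\ref{hookremovedata} and Theorem~\ref{thm:oddcriterion}; the hook condition on the $2^d$-core is exactly what makes $(k,d)=(0,2)$ surjective but $(0,3)$ not, and it is invisible to any level-by-level address count. Your fallback logic for the boundary cases (non-injectivity plus equal cardinality forces non-surjectivity) is valid, but knowing the fibres over image points have size $2$ is again precisely the multiplicity statement of Theorem~\ref{thm5:image:reform}. So what your proposal defers as ``the main obstacle'' is not bookkeeping but the heart of the proof, and the lemma you plan to prove it with is false as stated.
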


Theorem~\ref{TheoremA} is a consequence of Theorem~\ref{thm5:image:reform} below, which describes the images of the maps~$f^n_k$.

For all $n\in\mathbb{N}$, $k\in\mathbb{N}_0$ with $2^k<n$ and any $\psi\in\Irr_{2'}(\fS_{n-2^k})$ we def\/ine
the set
\begin{gather*}
\mathcal{E}\big(\psi,2^k\big)=\big\{\chi\in\Irr_{2'}(\fS_n)\, |\, f_k^n(\chi)=\psi\big\},
\end{gather*}
and set $e\big(\psi,2^k\big)=\big|\mathcal{E}\big(\psi,2^k\big)\big|$. We show in Corollary~\ref{cor:reg} that the maps $f^n_k$ are regular on their images. This means that for any~$\psi$ in the image of~$f^n_k$, the number~$e(\psi, 2^k)$ depends only on $n$ and $k$ and not on the specif\/ic~$\psi$. We also give a complete description of those $\psi\in\Irr_{2'}(\fS_{n-2^k})$ such that $e(\psi,2^k)=0$, in Theorem~\ref{thm5:image:reform}.

In the f\/inal part of the paper we study commutativity. For convenience, we sometimes denote $f_k^n$ just by $f_k$, when the natural number $n$ is clear from the context. Then, for $k,{\ell}\in\mathbb{N}_0$, $k<\ell$, such that $2^k+2^{\ell}\leq n$, we may ask: \textit{when is $f_kf_{\ell}=f_{\ell}f_k$?} or more specif\/ically: \textit{when is $f^{n-2^{\ell}}_kf^n_{\ell}=f^{n-2^k}_{\ell}f^n_k$?} In \cite[Proposition~4.3]{INOT} it was proved that $f_kf_{\ell}=f_{\ell}f_k$ whenever $2^{\ell}<n<2^{\ell+1}$. This is the case $\ell=t$ in our second main result, which answers the question completely.

\begin{thmRoman}\label{TheoremB} Let $n=2^t+m$ where $0 \le m < 2^t$. Suppose that $k$, ${\ell}$ satisfy $0 \leq k<{\ell} \leq t$ and $2^k+2^{\ell}\leq n$. Then, {\em with the exception of the case} $n=6$, $k=0$, ${\ell}=1$,
\begin{gather*}
\text{$f_kf_{\ell}= f_{\ell}f_k$ if and only if $2^k > m$ or $\ell =t$}.
\end{gather*}
\end{thmRoman}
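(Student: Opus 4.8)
The plan is to reduce the statement entirely to the explicit combinatorial description of $f^n_k$ on partition labels recorded earlier, and then to analyse when the two resulting operations on labels commute. Writing $n=2^t+m$ with $0\le m<2^t$, so that $2^t$ is the top binary digit of $n$, I would first extract from that description how $f^n_k$ acts: removing $2^k$ boxes sends an odd character to the one whose label is obtained by a local modification of the $2$-core tower (equivalently, of the first-column hook data) at the levels lying between $k$ and the smallest binary digit $j_k:=k+d(n,k)$ of $n$ that is at least $k$. Concretely, if $2^k$ is itself a binary digit of $n$ then $f^n_k$ removes the corresponding box cleanly at level $k$ and leaves all other levels untouched; otherwise it ``borrows'' from the digit $2^{j_k}$, replacing the data at level $j_k$ by data spread over the levels $k,k+1,\dots,j_k-1$. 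The key point I would isolate is that $f^n_k$ alters the label data only in the range of levels $[k,j_k]$, so that two such maps automatically commute as soon as their ranges of affected levels are disjoint.

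With this in hand the argument splits according to the three mutually exclusive regimes determined by $k<\ell\le t$ and $2^k+2^\ell\le n$. If $\ell=t$, then $2^k+2^t\le n$ forces $2^k\le m$; hence $f_\ell$ removes the top $2^t$-hook at level $t$ while $f_k$ operates only inside the part $m$, on levels strictly below $t$, and disjointness of supports gives $f_kf_\ell=f_\ell f_k$. If $\ell<t$ and $2^k>m$, then $m<2^k<2^\ell$, so $n$ has no binary digit in the range $[k,t)$; both $f_k$ and $f_\ell$ are forced to borrow from the same top digit $2^t$, and here the two orders genuinely overlap. This overlapping case is the heart of the theorem: I would show that redistributing the top-level data down to levels $k,\dots,t-1$ may be carried out either in one step or by first spreading to levels $\ell,\dots,t-1$ and then further spreading the level-$\ell$ piece to $k,\dots,\ell-1$, with the outcome on each surviving level agreeing. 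Equivalently, after $f^n_k$ the number $2^\ell$ has become an honest binary digit (so $f^{n-2^k}_\ell$ acts cleanly), while after $f^n_\ell$ the map $f^{n-2^\ell}_k$ borrows from the newly created digit $2^\ell$; the required matching is a kind of associativity of the borrowing operation, and I expect this to be the main obstacle, demanding the finest use of the combinatorial description.

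It remains to treat the regime $\ell<t$ and $2^k\le m$, where commutativity should fail. Here $m$ carries a binary digit at least $2^k$, so $f_k$ borrows from (or removes) a digit strictly below $2^t$, whereas $f_\ell$ reaches a different digit, and the two reorderings act on incompatible level ranges. I would make this precise by exhibiting, for each such triple $(n,k,\ell)$, an explicit odd character whose two images differ, building it directly from the tower description so that the discrepancy is visible already at a single level. Finally, the triple $n=6$, $k=0$, $\ell=1$ lies in this last regime and yet commutes; since $\Irr_{2'}(\fS_6)$ is small, I would dispose of it by a direct check of the four composites, confirming it as the unique exception. Assembling the three regimes then yields the stated equivalence.
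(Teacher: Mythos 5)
Your key structural claim is false, and it is contradicted by the theorem you are trying to prove. You assert that $f^n_k$ modifies the $2$-core tower of an odd partition only at the levels in $[k,j_k]$, where $2^{j_k}$ is the smallest binary digit of $n$ with $j_k\ge k$, and you deduce that two such maps commute whenever their affected level ranges are disjoint. What is actually true (Lemma~\ref{hookremovedata} of the paper) is only that the core-tower rows strictly below level $k$ are untouched and that exactly one entry of the quotient row $k$ loses a box; the tower \emph{above} level $k$ can be rearranged by that box removal. Concretely, under any fixed convention the nonzero component of $Q_2((3))$ and that of $Q_2((2))$ occupy \emph{different} positions (on the abacus, the movable bead of $(3)$ and of $(2)$ sit on different runners), so applying $f_0$ to $(3)$ displaces the level-$1$ core-tower entry even though $2^0\subseteq_2 3$, i.e., even though $j_0=0$. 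This non-locality is precisely the mechanism that destroys commutativity. For a direct contradiction with your principle, take $n=5$, $k=0$, $\ell=1$: since $2^0\subseteq_2 5$, in your picture $f_0$ has support $\{0\}$ while $f_1$ has support $[1,2]$, and disjointness would force commutation; but $\lambda=(3,2)\vdash_o 5$ gives $f_1f_0(\lambda)=f_1((3,1))=(1,1)$, whereas $f_0f_1(\lambda)=f_0((3))=(2)$. This failure is exactly what Theorem~\ref{TheoremB} asserts, since here $\ell<t=2$ and $2^k\le m=1$. The same defect invalidates your treatment of the third regime: for $n=2^t+2^k$ with $k<\ell<t$, your supports $\{k\}$ and $[\ell,t]$ are disjoint, so your principle would predict commutativity precisely where the theorem asserts it fails.

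Beyond this, the two substantive regimes are plans rather than proofs. In the regime $\ell<t$, $2^k>m$ you explicitly defer the ``associativity of borrowing,'' calling it the main obstacle; this is where the paper's argument does its real work (Lemma~\ref{Lemma 3}), by induction on $k$ through the $2$-quotient: Theorem~\ref{thm:oddcriterion} forces both odd hooks to lie in the quotient component carrying the digit $2^{t-1}$, which reduces the statement for $n$ to the statement for $\lfloor n/2\rfloor$, with base case $n=2^t$ settled by an explicit computation with hook partitions and Lucas' theorem (Lemma~\ref{Lemma 2}). In the failure regime you promise an explicit counterexample ``for each such triple'' but construct none; the paper produces counterexamples only for $k=0$ (Lemma~\ref{Lemma 5}, itself a three-case analysis), and then propagates failure upward by a doubling lemma (Lemma~\ref{Lemma 6}: a counterexample for $(n;k,\ell)$, planted as one component of a $2$-quotient, yields counterexamples for $(2n;k+1,\ell+1)$ and $(2n+1;k+1,\ell+1)$), taking care that the exceptional triple $(6;0,1)$ forces separate verification of $(12;1,2)$ and $(13;1,2)$. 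A corrected write-up would need both of these mechanisms, or genuine substitutes for them, in place of the local-support heuristic.
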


\section{Notation and background}\label{background}
Let $n$ be a natural number. We let $\Irr(\fS_n)$ denote the set of irreducible characters of~$\fS_n$ and~$\mathcal{P}(n)$ the set of partitions of $n$. The notation $\lambda\in\mathcal{P}(n)$ is sometimes replaced by $\lambda\vdash n$ and we write~$|\lambda|=n$. There is a natural correspondence $\lambda \leftrightarrow \chi^{\lambda} $ between $\mathcal{P}(n)$ and $\Irr(\fS_n)$. We say then that $\lambda$ labels $\chi^{\lambda}$.
We denote by $\Irr_{2'}(\fS_n)$ the set of irreducible characters of $\fS_n$ of odd degree. If $\chi^{\lambda} \in \Irr_{2'}(\fS_n)$ we say that $\chi^{\lambda}$ is an {\it odd character}, we call $\lambda$ an {\it odd partition} of~$n$ and write $\lambda\vdash_o n$.
Also the empty partition will be considered as an odd partition.

\begin{rem}\label{rem:oddhook} Let $n,k$ be such that $2^k<n$. In \cite[Theorem~A and Proposition~4.2]{INOT} it is shown that the map $f_k^n\colon \Irr_{2'}(\fS_n)\to\Irr_{2'}(\fS_{n-2^k})$ may be described in terms of the odd partitions labelling the odd characters as follows:
\begin{gather*}
f_k^n(\chi^{\lambda})=\chi^{\mu} \Leftrightarrow \mu \vdash_o n-2^k \text{ can be obtained from } \lambda \vdash_o n \text{ by removing a } 2^k \text{-hook}.
\end{gather*}
Correspondingly we write (by abuse of notation) $f_k^n(\lambda)=\mu$. In fact when $\lambda$ is odd, there is only one $2^k$-hook of $\lambda$
whose removal leads again to an odd partition; we will refer to such a hook as an {\em odd hook} of~$\lambda$. This combinatorial description of $f^n_k$ will be used throughout this paper, and we will regard $f^n_k$ also as a map between the corresponding sets of odd partitions. Also, for
$\mu \vdash_o n-2^k$ we set $e\big(\mu,2^k\big) = e\big(\chi^{\mu},2^k\big)$.
\end{rem}

We need some concepts and basic facts concerning hooks in partitions. For any integer $e\in\mathbb{N}$ we denote by $C_e(\lambda)$ and $Q_e(\lambda)$ the $e$-core and the $e$-quotient of $\lambda$, respectively. Then $Q_e(\lambda)=(\lambda_0,\ldots,\lambda_{e-1})$ is an $e$-tuple of partitions satisfying $n=|C_e(\lambda)|+e\sum\limits_{i=0}^{e-1}|\lambda_i|$. It is well-known that a partition is uniquely determined by its $e$-core and $e$-quotient (we refer the reader to~\cite{OlssonBook} or \cite[Chapter~2.7]{JK} for a detailed discussion on this topic).

Let $\mathcal{H}_e(\lambda)$ be the set of hooks of~$\lambda$ having length divisible by $e$, and let $\mathcal{H}(Q_e(\lambda))=\cup_{i=0}^{e-1} \mathcal{H}(\lambda_{i})$. As explained in \cite[Theorem~3.3]{OlssonBook}, there is a bijection between $\mathcal{H}_e(\lambda)$ and $\mathcal{H}(Q_e(\lambda))$ mapping hooks in $\lambda$ of length $ex$ to hooks in the quotient of length~$x$. Moreover, the bijection respects the process of hook removal. Namely, the partition $\mu$ obtained by removing a $ex$-hook from $\lambda$ is such that $C_e(\mu)=C_e(\lambda)$ and the $e$-quotient of $\mu$ is obtained by removing an $x$-hook from one of the partitions involved in $Q_e(\lambda)$.

For $e=2$ we want to repeat the process of taking 2-cores and 2-quotients to obtain the {\it $2$-quotient tower} $\mathcal{Q}_2(\lambda)$ and the {\it $2$-core tower} $\mathcal{C}_2(\lambda)$ of $\lambda$. They have rows numbered by $k \ge 0$. The $k$th row $\mathcal{Q}^{(k)}_2(\lambda)$ of $\mathcal{Q}_2(\lambda)$ contains $2^k$ partitions $\lambda^{(k)}_i$, $0 \le i \le 2^k-1$, and the $k$th row $\mathcal{C}^{(k)}_2(\lambda)$ of~$\mathcal{C}_2(\lambda)$ contains the $2$-cores of these partitions in the same order, i.e., $C_2\big(\lambda^{(k)}_i\big)$, \mbox{$0 \le i \le 2^k-1$}. The 0th row of $\mathcal{Q}_2(\lambda)$ contains $\lambda=\lambda^{(0)}_0$ itself, row~1 contains the partitions~$\lambda^{(1)}_0$,~$\lambda^{(1)}_{1}$ occurring in the $2$-quotient $Q_2(\lambda)$, row~2 contains the partitions occurring in the $2$-quotients of partitions occurring in row~1, and so on. Specif\/ically we have $Q_2\big(\lambda^{(k)}_i\big) =\big(\lambda^{(k+1)}_{2i},\lambda^{(k+1)}_{2i+1}\big)$ for $i \in \big\{0,1,\dots,2^k-1\big\}$. We remark that the $2^k$ partitions in $\mathcal{Q}^{(k)}_2(\lambda)$ are the same as those in the $2^k$-quotient $Q_{2^k}(\lambda)$ of~$\lambda$, but in a dif\/ferent order for~$k\ge 2$.

We also introduce the {\em $k$-data} $\mathcal{D}^{(k)}_2(\lambda)$ of $\lambda$. This is a table containing the following $k+1$ rows: the $k$ rows $\mathcal{C}^{(j)}_2(\lambda)$, $j=0,\dots, k-1$, and in addition the row $\mathcal{Q}^{(k)}_2(\lambda)$.

\begin{rem}\label{recover} A partition $\lambda$ may be recovered from its $2$-core tower. For $k>0$, it may also be recovered from the knowledge of the $k$-data $\mathcal{D}^{(k)}_2(\lambda)$ of $\lambda$, because the rows $\mathcal{C}^{(l)}_2(\lambda)$ with $l \ge k$ of~$\mathcal{C}_2(\lambda)$ consist of the $2$-core towers of the partitions in~$\mathcal{Q}^{(k)}_2(\lambda)$.
\end{rem}

\begin{lem}\label{hookremovedata} Suppose that $\lambda \vdash n-2^k$ and $\mu \vdash n$. The following are equivalent.
\begin{itemize}\itemsep=0pt
\item[$(i)$] $\lambda$ is obtained from $\mu$ by removing a $2^k$-hook.
\item[$(ii)$] The $k$-data $\mathcal{D}^{(k)}_2(\mu)$ and $\mathcal{D}^{(k)}_2(\lambda)$ coincide, except that for one $i \in \big\{ 0, \ldots , 2^k-1\big\}$ $\lambda^{(k)}_i$ is obtained from $\mu^{(k)}_i$ by removing a $1$-hook.
\end{itemize}
\end{lem}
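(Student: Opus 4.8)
The plan is to translate both conditions~(i) and~(ii) into the language of the $2^k$-core and the $2^k$-quotient, using the hook-removal bijection of \cite[Theorem~3.3]{OlssonBook} recalled above, applied with $e=2^k$. Removing a $2^k$-hook is exactly the case $x=1$ of that bijection, so~(i) is equivalent to the conjunction of two conditions: $C_{2^k}(\mu)=C_{2^k}(\lambda)$, and $Q_{2^k}(\lambda)$ is obtained from $Q_{2^k}(\mu)$ by deleting a single box (a $1$-hook) from exactly one of its $2^k$ components. The quotient half then matches~(ii) at once: as observed in Section~\ref{background}, the partitions in the row $\mathcal{Q}^{(k)}_2(\alpha)$ are those of $Q_{2^k}(\alpha)$ reordered by a permutation $\sigma_k$ depending only on $k$ and not on the partition~$\alpha$. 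Hence deleting a box from the component of $Q_{2^k}(\mu)$ in position~$j$ is the same as deleting a box from $\mu^{(k)}_i$ with $\sigma_k(i)=j$, and ``exactly one component'' on the two sides corresponds.

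It remains to identify the core half. I would prove, by induction on $k$, that for every partition~$\alpha$ the first $k$ rows $\mathcal{C}^{(0)}_2(\alpha),\dots,\mathcal{C}^{(k-1)}_2(\alpha)$ of the $2$-core tower agree with the first $k$ rows of the $2$-core tower of $C_{2^k}(\alpha)$, while the rows of $\mathcal{C}_2\big(C_{2^k}(\alpha)\big)$ of index $\ge k$ are empty. The vanishing is immediate from Remark~\ref{recover}: since $C_{2^k}(\alpha)$ is a $2^k$-core, its row $\mathcal{Q}^{(k)}_2$ is empty, so its core-tower rows of index $\ge k$ are the ($2$-core towers of empty partitions, hence) empty. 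For the agreement of the first $k$ rows, since $C_{2^k}(\alpha)$ is reached from $\alpha$ by successively removing $2^k$-hooks, it suffices to check that removing one $2^k$-hook leaves $\mathcal{C}^{(0)}_2(\alpha),\dots,\mathcal{C}^{(k-1)}_2(\alpha)$ unchanged. Applying the $e=2$ bijection, such a removal fixes $\mathcal{C}^{(0)}_2(\alpha)=C_2(\alpha)$ and amounts to removing a $2^{k-1}$-hook from one of the two parts of $Q_2(\alpha)=\big(\alpha^{(1)}_0,\alpha^{(1)}_1\big)$; as $\mathcal{C}^{(j)}_2(\alpha)$ for $j\ge 1$ is the concatenation of $\mathcal{C}^{(j-1)}_2\big(\alpha^{(1)}_0\big)$ and $\mathcal{C}^{(j-1)}_2\big(\alpha^{(1)}_1\big)$, the inductive hypothesis applied to the relevant part shows that rows $1,\dots,k-1$ are unchanged as well. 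Combined with Remark~\ref{recover}, which recovers a $2^k$-core from its first $k$ core-tower rows, this gives that $C_{2^k}(\mu)=C_{2^k}(\lambda)$ holds if and only if $\mathcal{C}^{(j)}_2(\mu)=\mathcal{C}^{(j)}_2(\lambda)$ for all $j=0,\dots,k-1$.

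Combining the two halves yields the equivalence of~(i) and~(ii): the core condition matches the requirement that the rows $\mathcal{C}^{(j)}_2$ of $\mu$ and $\lambda$ agree for $j<k$, while the quotient condition matches the requirement that $\lambda^{(k)}_i$ be obtained from $\mu^{(k)}_i$ by removing a $1$-hook for exactly one~$i$; since the two $k$-data share all these rows and differ only in that one entry, they coincide except in precisely the stated way. I expect the main obstacle to be the inductive core step, namely tracking how a single $2^k$-hook removal propagates through the recursive layers of the $2$-core/$2$-quotient construction and verifying the concatenation description of $\mathcal{C}^{(j)}_2(\alpha)$, together with confirming that the reordering permutation $\sigma_k$ in the quotient half is genuinely independent of the partition, so that the ``exactly one component'' counts on the two sides line up.
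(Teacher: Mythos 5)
Your proposal is correct, but it follows a genuinely different route from the paper's. The paper never passes through the $2^k$-core and $2^k$-quotient at all: it applies the $e=2$ bijection of \cite[Theorem~3.3]{OlssonBook} repeatedly down the tower, so that a $2^k$-hook $H_0$ of $\mu$ corresponds to a $2^{k-1}$-hook $H_1$ in a partition of $\mathcal{Q}^{(1)}_2(\mu)$, and after $k$ steps to a $1$-hook $H_k$ in some $\mu^{(k)}_i$; since each intermediate hook $H_j$, $j<k$, has even length, its removal leaves the $2$-core at its level unchanged, which yields in a single pass both the agreement of the rows $\mathcal{C}^{(j)}_2$ for $j<k$ and the one-box change in row $k$ (the converse implication being implicit, since by Remark~\ref{recover} the $k$-data determine the partition). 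Your decomposition -- one application of the bijection with $e=2^k$, plus two bridging facts -- is more modular, but the cost is exactly those two facts: (a) that the entries of $\mathcal{Q}^{(k)}_2(\alpha)$ are the components of $Q_{2^k}(\alpha)$ reordered by a permutation $\sigma_k$ \emph{independent of} $\alpha$ (Section~\ref{background} only says ``the same partitions, in a different order''; the independence is true, $\sigma_k$ being the digit-reversal permutation of indices, as one sees from the beta-set construction of quotients, and it is precisely what makes the position $i$ in condition $(ii)$ match up on both sides, so it genuinely requires proof rather than citation); and (b) the characterization of $C_{2^k}(\mu)=C_{2^k}(\lambda)$ by equality of the core-tower rows $0,\dots,k-1$, whose inductive proof (tracking a $2^k$-hook removal through $Q_2$) is in substance the same iteration the paper performs directly. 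Note also that your ``immediate'' claim that a $2^k$-core has empty row $\mathcal{Q}^{(k)}_2$ does not follow from Remark~\ref{recover} alone: that remark reduces the core-tower rows of index $\ge k$ to the towers of the row-$k$ quotient entries, and you still need fact (a) (or a counting argument via the $2^k$-quotient) to know those entries are empty. In exchange, your route makes (a) and (b) explicit as reusable structural lemmas and proves both implications symmetrically, whereas the paper's converse is left implicit.
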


\begin{proof}A $2^k$-hook $H_0$ in $\mu$ corresponds in a canonical way to a $2^{k-1}$-hook $H_1$ in a partition in~$\mathcal{Q}^{(1)}_2(\mu)$, i.e., in row 1 of the $2$-quotient tower~$\mathcal{Q}_2(\mu)$. Continuing we see that $H_0$ corresponds in a canonical way to a 1-hook $H_k$ in a~partition $\mu^{(k)}_i$ in $\mathcal{Q}^{(k)}_2(\mu)$, row $k$ of~$\mathcal{Q}_2(\mu)$. If $\lambda$ is obtained by removing $H_0$ from $\mu$, this corresponds to~$\lambda^{(k)}_i$ being obtained by removing the 1-hook $H_k$ from $\mu^{(k)}_i$ (by repeated applications of \cite[Theorem~3.3]{OlssonBook}). Apart from this the rows $\mathcal{Q}^{(k)}_2(\mu)$ and~$\mathcal{Q}^{(k)}_2(\lambda)$ coincide. Note also that the rows $\mathcal{C}^{(j)}_2(\mu)$ and $\mathcal{C}^{(j)}_2(\lambda)$ coincide for $j=0,\dots, k-1$, since the removal of the hooks~$H_j$ of even length do not change the 2-cores.
\end{proof}

Odd-degree characters of $\fS_n$ and thus odd partitions were completely described in~\cite{Mac}. We restate this result in a language which is convenient for our purposes. We let $c_2^{(k)}(\lambda)$ be the sum of the cardinalities of the partitions in the $k$th row $\mathcal{C}^{(k)}_2(\lambda)$ of $\mathcal{C}_2(\lambda)$.

\begin{lem}[\cite{Mac}]\label{macd} Let $\lambda$ be a partition. Then $\lambda$ is odd if and only if $c_2^{(k)}(\lambda) \leq 1$ for all $k\ge 0$.
\end{lem}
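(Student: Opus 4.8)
The plan is to compute $\nu_2\big(\chi^\lambda(1)\big)$ exactly in terms of the $2$-core tower and to read off the odd-degree condition from the resulting formula. Write $n=|\lambda|$ and let $s(n)$ denote the number of $1$'s in the binary expansion of $n$. By the hook length formula, $\chi^\lambda(1)=n!/\prod_h h$ where the product runs over all hook lengths $h$ of $\lambda$; hence, using Legendre's formula $\nu_2(n!)=n-s(n)$,
\[
\nu_2\big(\chi^\lambda(1)\big)=\nu_2(n!)-\sum_h \nu_2(h)=\big(n-s(n)\big)-\sum_h\nu_2(h).
\]
Everything thus reduces to evaluating $\sum_h\nu_2(h)$.

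First I would rewrite this sum by counting multiplicities of each power of $2$:
\[
\sum_h\nu_2(h)=\sum_{j\ge1}\#\big\{h:2^j\mid h\big\}=\sum_{j\ge1}\big|\mathcal H_{2^j}(\lambda)\big|.
\]
By the bijection of \cite[Theorem~3.3]{OlssonBook} recalled above, $\big|\mathcal H_{2^j}(\lambda)\big|=\big|\mathcal H(Q_{2^j}(\lambda))\big|=\sum_i|\lambda_i|$ equals the total number of cells of the $2^j$-quotient, each cell of a partition being the corner of a unique hook. Since the partitions in row $j$ of $\mathcal Q_2(\lambda)$ are precisely those of $Q_{2^j}(\lambda)$, possibly reordered, this quantity equals $N_j:=\sum_{i=0}^{2^j-1}\big|\lambda^{(j)}_i\big|$, the number of cells in row $j$ of the quotient tower.

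Next I would set up the recursion linking these counts to the core tower. Applying the relation $|\mu|=|C_2(\mu)|+2\big(|\mu_0|+|\mu_1|\big)$, for $Q_2(\mu)=(\mu_0,\mu_1)$, to each $\lambda^{(j)}_i$ and summing over $i$ gives $N_j=c_2^{(j)}(\lambda)+2N_{j+1}$, with $N_0=n$. Unfolding yields $N_j=\sum_{i\ge0}2^i c_2^{(j+i)}(\lambda)$, in particular $n=\sum_{k\ge0}2^k c_2^{(k)}(\lambda)$; summing over $j\ge1$ and interchanging the order of summation, the geometric sum $\sum_{i=0}^{m-1}2^i=2^m-1$ gives
\[
\sum_h\nu_2(h)=\sum_{j\ge1}N_j=\sum_{m\ge0}\big(2^m-1\big)c_2^{(m)}(\lambda)=n-\sum_{k\ge0}c_2^{(k)}(\lambda).
\]
Substituting into the first display produces the clean formula $\nu_2\big(\chi^\lambda(1)\big)=\sum_{k\ge0}c_2^{(k)}(\lambda)-s(n)$.

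Finally, $\chi^\lambda$ is odd exactly when this valuation is $0$, i.e., when $\sum_k c_2^{(k)}(\lambda)=s(n)$. Writing $a_k=c_2^{(k)}(\lambda)\ge0$, the identity $n=\sum_k 2^k a_k$ exhibits $n$ as a nonnegative integer combination of powers of $2$; among all such combinations the coefficient sum $\sum_k a_k$ is minimized precisely by the binary expansion, since whenever some $a_k\ge2$ the carry $a_k\mapsto a_k-2$, $a_{k+1}\mapsto a_{k+1}+1$ preserves $n$ while strictly lowering $\sum_k a_k$. Hence $\sum_k a_k\ge s(n)$, with equality if and only if every $a_k\le1$, which is the asserted criterion. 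I expect the main work to be the bookkeeping in the middle step---invoking the hook bijection at each level and carrying out the double summation correctly---while the closing minimality argument is elementary.
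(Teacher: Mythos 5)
Your proof is correct. Note that the paper itself offers no argument for this lemma: it is stated as a restatement of a known theorem and attributed to Macdonald~\cite{Mac}, so there is no internal proof to compare against. What you have written is essentially a self-contained reconstruction of Macdonald's original argument at the prime $2$: you derive the exact valuation formula $\nu_2\big(\chi^\lambda(1)\big)=\sum_{k\ge 0}c_2^{(k)}(\lambda)-s(n)$ from the hook length formula, Legendre's formula, and the hook/quotient bijection applied level by level down the $2$-core tower, and then conclude with the carrying argument showing that in any representation $n=\sum_k 2^k a_k$ with $a_k\ge 0$ one has $\sum_k a_k\ge s(n)$, with equality exactly when every $a_k\le 1$. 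The individual steps all check out: the identification $\big|\mathcal{H}_{2^j}(\lambda)\big|=N_j$ legitimately uses the paper's remark that row $j$ of the quotient tower consists of the partitions of $Q_{2^j}(\lambda)$ up to reordering; the recursion $N_j=c_2^{(j)}(\lambda)+2N_{j+1}$ is the standard core--quotient size relation summed over a row; and the interchange of summation producing the coefficient $2^m-1$ is carried out correctly. So your proposal supplies, correctly, precisely the proof that the paper outsources to the literature.
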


It may be decided from the $k$-data $\mathcal{D}^{(k)}_2(\lambda)$ whether $\lambda$ is odd. The case $k=1$ of the following result appeared in
\cite[Lemma~4.1]{INOT} and also in \cite[Lemma~6]{APS}.

\begin{thm}\label{thm:oddcriterion} Let $\lambda \vdash n$, and let $k \ge 0$ be fixed. Consider $\mathcal{Q}_2^{(k)}(\lambda)=\big(\lambda^{(k)}_i\big)$. Then $\lambda$ is odd if and only if the following conditions are all fulfilled:
\begin{itemize}\itemsep=0pt
\item[$(i)$] $c_2^{(j)}(\lambda) \leq 1$ for all $j < k$.
\item[$(ii)$] The partitions $\lambda^{(k)}_i$, $0\le i \le 2^k-1$, are all odd.
\item[$(iii)$] The numbers $\big|\lambda^{(k)}_i\big|$, $0\le i \le 2^k-1$, are pairwise $2$-disjoint.
\end{itemize}
In this case $ \sum\limits_{i \ge 0} \big|\lambda^{(k)}_i\big|=\big\lfloor \frac{n}{2^k} \big\rfloor$.
\end{thm}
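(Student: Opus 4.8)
The plan is to reduce everything to Macdonald's criterion (Lemma~\ref{macd}) by exploiting the additive behaviour of the quantities $c_2^{(j)}$ when one passes from $\lambda$ to the partitions sitting in the $k$th row of its $2$-quotient tower. Two preliminary ingredients are needed. The first is the size identity
\[
|\mu| = \sum_{m \ge 0} 2^m\, c_2^{(m)}(\mu),
\]
valid for every partition $\mu$; it follows by iterating the $2$-quotient size formula $|\mu| = |C_2(\mu)| + 2\big(|\mu_0|+|\mu_1|\big)$, where $Q_2(\mu)=(\mu_0,\mu_1)$, together with the fact that the $2$-core towers of $\mu_0,\mu_1$ build up the rows of index $\ge 1$ of the $2$-core tower of $\mu$. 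The second is the concatenation property of Remark~\ref{recover}: for every $m\ge 0$ the row $\mathcal{C}^{(k+m)}_2(\lambda)$ is the juxtaposition of the $m$th rows of the $2$-core towers of the partitions $\lambda^{(k)}_i\in\mathcal{Q}^{(k)}_2(\lambda)$. Summing cardinalities along this row (the order being immaterial) yields the key relation
\[
c_2^{(k+m)}(\lambda)=\sum_{i=0}^{2^k-1} c_2^{(m)}\big(\lambda^{(k)}_i\big), \qquad m\ge 0.
\]

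I would then apply Lemma~\ref{macd}: $\lambda$ is odd exactly when $c_2^{(j)}(\lambda)\le 1$ for all $j\ge 0$. Splitting the index range at $k$, the constraints with $j<k$ are literally condition~$(i)$, whereas for $j=k+m\ge k$ the key relation converts them into the family $\sum_{i} c_2^{(m)}\big(\lambda^{(k)}_i\big)\le 1$, $m\ge 0$. It remains to identify this family with the conjunction of $(ii)$ and $(iii)$. For the forward implication, nonnegativity of the summands forces $c_2^{(m)}\big(\lambda^{(k)}_i\big)\le 1$ for all $i,m$, so each $\lambda^{(k)}_i$ is odd by Lemma~\ref{macd}, giving~$(ii)$; and for an odd partition the size identity shows that $c_2^{(m)}$ is precisely the $m$th binary digit of the size, so ``at most one nonzero summand at each level~$m$'' translates into ``no two of the $\big|\lambda^{(k)}_i\big|$ share a binary digit'', which is~$(iii)$. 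The backward implication reverses this: $(ii)$ puts every $c_2^{(m)}\big(\lambda^{(k)}_i\big)$ in $\{0,1\}$, and $(iii)$ guarantees at most one of them equals $1$ at each level~$m$, so each row-sum is $\le 1$.

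For the displayed count, assume $\lambda$ is odd and feed the key relation into the size identity applied to each $\lambda^{(k)}_i$; summing over $i$ gives $2^k\sum_i \big|\lambda^{(k)}_i\big| = \sum_{j\ge k} 2^j c_2^{(j)}(\lambda) = n - \sum_{j<k} 2^j c_2^{(j)}(\lambda)$. Since $\lambda$ is odd each $c_2^{(j)}(\lambda)\le 1$, so the subtracted quantity lies in $\big[0,2^k-1\big]$; dividing by $2^k$ and taking the integer part yields $\sum_i \big|\lambda^{(k)}_i\big| = \big\lfloor n/2^k\big\rfloor$. The one genuinely conceptual step, and the place where care is required, is the passage between the analytic condition ``$\sum_i c_2^{(m)}\big(\lambda^{(k)}_i\big)\le 1$ for all $m$'' and the arithmetic condition~$(iii)$; everything hinges on reading off the binary digits of an odd partition's size directly from its $c_2^{(m)}$ values, which is exactly what the size identity provides.
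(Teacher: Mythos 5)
Your proof is correct, and it rests on exactly the two ingredients the paper's own (one-line) proof cites: Remark~\ref{recover} and Macdonald's criterion (Lemma~\ref{macd}). The only difference is organizational --- the paper phrases the argument as an induction on $k$, whereas you unpack that induction into the closed-form identity $c_2^{(k+m)}(\lambda)=\sum_i c_2^{(m)}\big(\lambda^{(k)}_i\big)$ together with the size formula $|\mu|=\sum_m 2^m c_2^{(m)}(\mu)$, which is the same content stated non-inductively.
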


\begin{proof}This is proved by induction on $k \ge 0$, using Remark~\ref{recover} and Lemma~\ref{macd}.
\end{proof}

We illustrate the result above by giving an example.

\begin{exa}\label{example1} Let $n=15$ and take $\lambda=\big(5,4,2^2,1^2\big) \vdash 15$. To decide whether $\lambda$ is odd, we choose $k=2$ and compute the 2-data $\mathcal{D}_2^{(2)}(\lambda)$. The 2-core is $C_2(\lambda)=(1)$, giving $\mathcal{C}_2^{(0)}(\lambda)=((1))$. Furthermore, the 2-quotient is $Q_2(\lambda)=\big(\big(2^2,1^2\big),(1)\big)$, and computing the 2-cores $C_2\big(\big(2^2,1^2\big)\big)=(0)$, $C_2((1))=(1)$, we obtain the next row: $\mathcal{C}_2^{(1)}(\lambda)=((0),(1))$. The 2-quotients are $Q_2\big(\big(2^2,1^2\big)\big)$ $=\big(\big(1^2\big), (1)\big)$, $Q_2((1))=((0), (0))$; hence the f\/inal row of the 2-data table is obtained as $\mathcal{Q}_2^{(2)}(\lambda)=\big(\big(1^2\big),(1),(0),(0)\big)$.

We visualize $\mathcal{D}_2^{(2)}(\lambda)$ like this:
\begin{gather*}
\begin{array}{@{}r c c c c c c c c c c c}
\mathcal{C}_2^{(0)}(\lambda)\colon &&&&(1)\\[5pt]
\mathcal{C}_2^{(1)}(\lambda)\colon &&(0)&&&&(1) \\[5pt]
\mathcal{Q}_2^{(2)}(\lambda)\colon &(1^2)&&(1)&&(0)&&(0)
\end{array}
\end{gather*}

Theorem~\ref{thm:oddcriterion} shows that $\lambda$ is odd and thus it contains a unique odd 4-hook. Again using the theorem, it is clear that removing this 4-hook corresponds to the second partition~$(1)$ in~$\mathcal{Q}_2^{(2)}$ being replaced by~$(0)$. Thus, removing the corresponding 4-hook of $\lambda$ we obtain the odd partition $\mu=\big(3,2^3,1^2\big) \vdash 11$ with the property that $\mathcal{D}_2^{(2)}(\lambda)$ and $\mathcal{D}_2^{(2)}(\mu)$ dif\/fer only in their f\/inal row.
\end{exa}

\begin{rem}\label{oddconstruct} Using the construction of partitions from their 2-cores and 2-quotients already mentioned, the criterion above can be applied to construct all odd partitions of $n$ with a specif\/ic $k$th row in the 2-quotient tower. For this, let $n,k\in \N$, and take any sequence of odd parti\-tions~$\nu_i$, $0\le i \le 2^k-1$, such that the numbers $|\nu_i|$ are pairwise 2-disjoint, and $ \sum\limits_{i \ge 0} |\nu_i|=\big\lfloor \frac{n}{2^k} \big\rfloor$. Then there are exactly $\prod\limits_{{m<k} \atop {2^m \subseteq_2 n}} 2^m$ odd partitions~$\lambda$ of~$n$ with $\mathcal{Q}_2^{(k)}(\lambda)=(\nu_i)$, obtained by choosing one 2-core in row~$m$ of the $k$-data table to be~$(1)$, for each $m<k$ such that $2^m \subseteq_2 n$.
\end{rem}

The following easy consequence of Theorem~\ref{thm:oddcriterion} will be used repeatedly.

\begin{lem}\label{Lemma 4} Let $2^t$ be the largest binary digit of $n$. A partition $\lambda$ of~$n$ is odd if and only if~$\lambda$ contains a~unique $2^t$-hook and the partition obtained from $\lambda$ by removing this $2^t$-hook is an odd partition of $n-2^t$.
\end{lem}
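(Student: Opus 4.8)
The plan is to derive this from Theorem~\ref{thm:oddcriterion} applied at level $k=t$, where $2^t$ is the largest binary digit of $n$. The key structural observation is that when $2^t$ is the \emph{largest} binary digit of $n$, we have $2^t \le n < 2^{t+1}$, so $\lfloor n/2^t \rfloor = 1$. Thus by the final assertion of Theorem~\ref{thm:oddcriterion}, if $\lambda \vdash_o n$ then $\sum_{i} |\lambda^{(t)}_i| = 1$, which forces exactly one of the partitions $\lambda^{(t)}_i$ in the $t$th row $\mathcal{Q}_2^{(t)}(\lambda)$ to equal $(1)$ and all the others to be empty. Since a single node $(1)$ carries a unique $1$-hook and the empty partitions carry none, this shows via the hook-correspondence (the repeated application of \cite[Theorem~3.3]{OlssonBook} recorded in the proof of Lemma~\ref{hookremovedata}) that $\lambda$ has exactly one $2^t$-hook.

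For the forward direction, I would assume $\lambda \vdash_o n$ and argue as above that there is a unique $2^t$-hook $H$ in $\lambda$, corresponding to the unique $(1)$ entry in row $t$. Removing $H$ replaces that $(1)$ by $(0)$, and by Lemma~\ref{hookremovedata} this changes only the $t$th row of the data, leaving the rows $\mathcal{C}^{(j)}_2(\lambda)$, $j<t$, untouched. I then need to check that the resulting partition $\mu \vdash n - 2^t$ is odd. Using Lemma~\ref{macd}, this amounts to verifying $c_2^{(k)}(\mu) \le 1$ for all $k \ge 0$. The rows with $j < t$ are unchanged, hence still satisfy the bound; the rows with $k \ge t$ form the $2$-core towers of the (now all empty) partitions in $\mathcal{Q}_2^{(t)}(\mu)$, so they are trivial and the bound holds there too. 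Hence $\mu$ is odd.

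For the converse, I would assume $\lambda \vdash n$ has a unique $2^t$-hook whose removal yields an odd partition $\mu \vdash_o n - 2^t$, and deduce that $\lambda$ is odd by verifying the criterion of Lemma~\ref{macd}. Again invoking Lemma~\ref{hookremovedata}, removing this $2^t$-hook changes $\mu$ and $\lambda$ only in their $t$th row and leaves the core-tower rows $\mathcal{C}^{(j)}_2$ with $j<t$ identical; since $\mu$ is odd these satisfy $c_2^{(j)} \le 1$, so the same holds for $\lambda$. For the rows $k \ge t$, I would use that the uniqueness of the $2^t$-hook, translated through the hook correspondence, restricts the possible configurations in row $t$ and below, and combine this with the oddness of $\mu$ to control $c_2^{(k)}(\lambda)$ for $k \ge t$.

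The main obstacle I expect is the converse direction, specifically ensuring that the \emph{uniqueness} hypothesis on the $2^t$-hook is strong enough to bound the cardinalities $c_2^{(k)}(\lambda)$ for $k \ge t$. The cleanest route is probably to avoid checking Lemma~\ref{macd} row by row and instead argue directly through Theorem~\ref{thm:oddcriterion} at level $t$: the existence of a unique $2^t$-hook forces $\sum_i |\lambda^{(t)}_i| = 1$ (otherwise, if the sum were $0$ there would be no $2^t$-hook, and if it exceeded $1$ or the entries were not $2$-disjoint one could locate more than one $2^t$-hook or violate oddness of $\mu$), which pins down row $t$ to a single $(1)$; combined with the oddness of $\mu$ controlling the rows $\mathcal{C}^{(j)}_2$ for $j<t$, conditions $(i)$--$(iii)$ of Theorem~\ref{thm:oddcriterion} then all hold for $\lambda$, giving oddness immediately. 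Making the uniqueness-to-$2$-disjointness implication precise is the delicate point.
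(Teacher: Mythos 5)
Your overall strategy---apply Theorem~\ref{thm:oddcriterion} at level $k=t$ and exploit $\lfloor n/2^t\rfloor=1$---is exactly the intended route (the paper gives no written proof, stating the lemma as an easy consequence of that theorem), and your forward direction is complete and correct. The problem is the converse, at precisely the step you flag as delicate: your justification there is faulty. You argue that if $\sum_i |\lambda^{(t)}_i|$ exceeded $1$, then ``one could locate more than one $2^t$-hook.'' That inference is false: under the iterated hook correspondence, the number of $2^t$-hooks of $\lambda$ equals the total number of removable nodes ($1$-hooks) among the entries of $\mathcal{Q}^{(t)}_2(\lambda)$, and a partition with many nodes can have a single removable node---if one entry were the rectangle $(2,2)$ and the rest empty, $\lambda$ would have a unique $2^t$-hook while the sum is $4$. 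So uniqueness of the $2^t$-hook is not the mechanism that pins row $t$ down to a single node, and the ``uniqueness-to-$2$-disjointness implication'' you were trying to make precise is the wrong lever. What actually does the work is the hypothesis your converse never invokes: $n-2^t<2^t$ (equivalently $n<2^{t+1}$, i.e., $2^t$ is the largest binary digit).

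The repair is short once you route the argument through $\mu$ rather than through $\lambda$'s row $t$. Since $\mu\vdash_o n-2^t$ is odd, the final assertion of Theorem~\ref{thm:oddcriterion} at level $t$ gives $\sum_i |\mu^{(t)}_i|=\big\lfloor \frac{n-2^t}{2^t}\big\rfloor=0$, so every entry of $\mathcal{Q}^{(t)}_2(\mu)$ is empty. By Lemma~\ref{hookremovedata}, $\mathcal{Q}^{(t)}_2(\lambda)$ agrees with $\mathcal{Q}^{(t)}_2(\mu)$ except that one entry gains a single node; hence row $t$ of $\mathcal{Q}_2(\lambda)$ consists of one copy of $(1)$ and empty partitions, so conditions $(ii)$ and $(iii)$ of Theorem~\ref{thm:oddcriterion} hold trivially, while condition $(i)$ holds because $\mathcal{C}^{(j)}_2(\lambda)=\mathcal{C}^{(j)}_2(\mu)$ for $j<t$ and $\mu$ is odd. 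Thus $\lambda$ is odd. (Alternatively, the weight identity $n=\sum_{j<t}2^j c^{(j)}_2(\lambda)+2^t\sum_i|\lambda^{(t)}_i|$ together with $n<2^{t+1}$ bounds the sum by $1$ directly.) Note that this argument never uses uniqueness of the $2^t$-hook at all: uniqueness falls out as a byproduct in both directions, which is a sign that trying to feed it in as an input was leading you astray.
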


\section{Surjectivity and regularity}\label{sec:4}

The aim of this section is to study the images of the maps~$f_k^n$ for all~$n$,~$k$ such that $2^k\leq n$. For this purpose we introduce the concept of $d$\textit{-good partitions} (see Def\/inition~\ref{def:good} below). This will allow us to prove Theorem~\ref{thm5:image:reform} (describing the images) and thus Theorem~\ref{TheoremA} (describing exactly when $f_k^n$ is surjective) and to show that the maps $f_k^n$ are always regular on their image (see Corollary~\ref{cor:reg}).

\begin{defi}\label{def:good}
Let $d \ge 0$. We call an odd partition $\lambda$ {\it $d$-good}, if
\begin{itemize}\itemsep=0pt
\item[(i)] $|\lambda| \equiv 2^d-1 \mod 2^{d+1}$.
\item[(ii)] $C_{2^{d}}(\lambda) $ is a hook partition.
\end{itemize}
\end{defi}
Let us remark that condition (i) may be reformulated as
\begin{itemize}\itemsep=0pt
\item[(i$^*$)] $\nu_2(|\lambda|+1)=d$.
\end{itemize}

In particular, if $\lambda$ is $d$-good, then $|\lambda|$ is odd if and only if $d>0$.

The relevance of $d$-good partitions in our context is illuminated by the following reformulation of \cite[Theorem~2]{APS}:

\begin{lem}\label{ext:1} Let $\lambda \vdash_o n$. Let $d=\nu_2(n+1)$. Then $e(\lambda,1) \neq 0$ if and only if $\lambda$ is $d$-good. In this case, $e(\lambda,1)=1$ if $d=0$, and $e(\lambda,1)=2$ if $d>0$.
\end{lem}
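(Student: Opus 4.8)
The plan is to turn $e(\lambda,1)$ into a purely combinatorial count and then localise it at the bottom of the $2$-core tower of $\lambda$. First I would invoke Remark~\ref{rem:oddhook}: since $f_0^{n+1}(\kappa)$ is the \emph{unique} odd partition obtained from $\kappa$ by deleting a box, an odd $\kappa \vdash n+1$ satisfies $f_0^{n+1}(\kappa)=\lambda$ exactly when $\kappa$ is obtained from $\lambda$ by adding one box. Hence
\begin{gather*}
e(\lambda,1)=\#\{\kappa \vdash_o n+1 : \kappa \text{ is obtained from } \lambda \text{ by adding one box}\},
\end{gather*}
the number of \emph{odd covers} of $\lambda$ in Young's lattice. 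Writing $d=\nu_2(n+1)$, condition (i) of Definition~\ref{def:good} holds automatically for $\lambda\vdash n$, so the assertion to prove is that this number is $0$ unless $C_{2^d}(\lambda)$ is a hook, in which case it is $1$ for $d=0$ and $2$ for $d>0$.

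The key step is a reduction to $\rho:=C_{2^d}(\lambda)\vdash 2^d-1$. Because $\nu_2(n+1)=d$, the digits $2^0,\dots,2^{d-1}$ occur in $n$ while $2^d$ does not; by Lemma~\ref{macd} the $2$-core tower of odd $\lambda\vdash n$ carries exactly one box in each row $\mathcal C_2^{(0)},\dots,\mathcal C_2^{(d-1)}$, none in row $d$, and one box in each higher row $k$ with $2^k\subseteq_2 n$. For an odd $\kappa\vdash n+1$ the rows $0,\dots,d-1$ become empty, row $d$ acquires a box, and rows $\ge d+1$ carry boxes in the same rows as for $\lambda$. I would show that if $\kappa$ is an odd cover of $\lambda$ then the towers in fact agree in all rows $\ge d+1$ (relocating a box in a row $\ge d+1$ alters the partition by hooks of length at least $2^{d+1}$, not by a single box), so that $\kappa$ is pinned down by the position of its unique box in row $d$. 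As the rows $\ge d+1$ of $\lambda$ are thus inert, this identifies the odd covers of $\lambda$ with the odd covers of $\rho$, whose tower consists precisely of these same bottom boxes. Concretely I would realise the bijection by peeling off, via Lemma~\ref{Lemma 4}, the unique hook of maximal binary-digit length $2^t$ ($t\ge d+1$) from both $\lambda$ and $\kappa$ and inducting, using that such a removal preserves $C_{2^d}$ and commutes with the bottom box-addition.

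It then remains to count the odd covers of $\rho\vdash 2^d-1$, which is clean. An odd partition $\sigma\vdash 2^d$ has $2^d$ as its only binary digit, so by Lemma~\ref{Lemma 4} it has a unique $2^d$-hook whose removal gives the empty partition; that is, the odd partitions of $2^d$ are precisely the hook partitions of $2^d$. A hook of size $2^d$ covers $\rho$ if and only if $\rho$ is itself a hook, in which case exactly two hooks cover it (extend the arm, or extend the leg); for $d=0$ one has $\rho=\emptyset$ and the single cover $(1)$ is odd. Since condition (i) of Definition~\ref{def:good} is automatic and $\rho=C_{2^d}(\lambda)$ is a hook exactly when condition (ii) holds, this gives $e(\lambda,1)\ne 0$ iff $\lambda$ is $d$-good, with the stated values $1$ (for $d=0$) and $2$ (for $d>0$).

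The main obstacle is the inertness claim in the reduction: a single box is a $1$-hook and hence does not respect the $2^d$-structure (on the $2^d$-abacus it is a cross-runner move), so its effect on the higher rows of the tower is not transparent. Making precise that, for odd $\lambda$, passing to an odd cover freezes all rows $\ge d+1$ and acts only through the carry in rows $0,\dots,d$ is the heart of the argument; I expect to establish it from the iterated $2$-quotient description in Theorem~\ref{thm:oddcriterion} together with the hook-removal correspondence of Lemma~\ref{hookremovedata}, decoupling the maximal hook (a single box in the level-$t$ quotient row) from the box added at the bottom. Alternatively, one may simply translate the statement of \cite[Theorem~2]{APS}, of which this lemma is a reformulation, matching its condition with condition (ii) of Definition~\ref{def:good}.
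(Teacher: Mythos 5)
Your closing fallback---``simply translate the statement of \cite[Theorem~2]{APS}''---is in fact all the paper does: Lemma~\ref{ext:1} is stated there without proof, as a reformulation of that theorem, so as a citation your proposal coincides with the paper. As a self-contained argument, however, it has a genuine gap, and not merely the one you flag: the ``inertness'' claim at its core is false. Take $n=5$, so $d=\nu_2(6)=1$, and $\lambda=(3,2)\vdash_o 5$, whose odd covers are $(4,2)$ and $(3,3)$. With the conventions of the paper, $Q_2((3,2))=((1,1),(0))$, so row $2$ of the $2$-core tower of $\lambda$ has its single box above the component $(1,1)$, at position $0$. But $Q_2((4,2))=((1),(2))$ and $Q_2((3,3))=((2),(1))$: the row-$2$ box of $(4,2)$ sits above the component $(2)$, at position $3$, and that of $(3,3)$ at position $1$. (This is convention-independent: the size-two quotient component of $(4,2)$ lies on the opposite runner from that of $\lambda$, and for $(3,3)$ it is a horizontal rather than a vertical domino.) So neither odd cover agrees with $\lambda$ in row $d+1=2$, contrary to your claim that the towers agree in all rows $\ge d+1$. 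The parenthetical justification you offer (``relocating a box in a row $\ge d+1$ alters the partition by hooks of length at least $2^{d+1}$'') is exactly the fallacy: adding a $1$-hook is a cross-runner move, and two partitions whose towers differ drastically in high rows can perfectly well differ by a single box. The lemma itself survives in this example---$e((3,2),1)=2$ and $(3,2)$ is $1$-good---but not by your mechanism, and with it your identification of odd covers of $\lambda$ with odd covers of $\rho=C_{2^d}(\lambda)$, on which the whole count rests, is left without proof.

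The peeling variant you mention (remove the unique $2^t$-hooks from both $\lambda$ and $\kappa$ via Lemma~\ref{Lemma 4} and induct) is salvageable, but it is not a consequence of the inertness claim, and the assertion that peeling ``commutes with the bottom box-addition'' is precisely the nontrivial content. One can prove it on the abacus: writing $\beta(\kappa)=B\cup\{b\}$ and $\beta(\lambda)=B\cup\{b-1\}$, a descending-chain argument using the \emph{uniqueness} of the $2^t$-hooks shows the two hook beads either coincide or are exactly the pair $b$, $b-1$, and in both cases the covering relation is preserved; one then still needs the converse lifting to get a bijection rather than a mere map. None of this is in your proposal, and deriving it from Theorem~\ref{thm:oddcriterion} together with Lemma~\ref{hookremovedata}, as you suggest, cannot work as stated: those results govern hooks of $2$-power length, which act row by row on the tower, whereas the crux here is that a $1$-hook does not respect that structure---as you yourself observe. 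So the proposal, read as a proof, fails at its central reduction; read as a pointer to \cite[Theorem~2]{APS}, it is exactly what the paper does.
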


\begin{lem}\label{d-good} Let $\lambda$ be an odd partition, and let $d\ge 0$. Then the following hold.
\begin{itemize}\itemsep=0pt
\item[$(1)$] For $d \le 2$, $\lambda$ is $d$-good if and only if $ |\lambda| \equiv 2^d-1 \mod 2^{d+1}$.
\item[$(2)$] If $\lambda$ is $d$-good, then $C_{2^{d}}(\lambda) $ is a partition of $2^d-1$.
\end{itemize}
\end{lem}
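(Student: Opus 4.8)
The plan is to reduce both statements to a single computation of the size of the $2^d$-core of an odd partition. Write $n=|\lambda|$ and apply the $e$-core/$e$-quotient size relation of Section~\ref{background} with $e=2^d$, so that $n=|C_{2^d}(\lambda)|+2^d\sum_i|\lambda_i|$, where the $\lambda_i$ run over the components of the $2^d$-quotient $Q_{2^d}(\lambda)$. Since these components are, up to reordering, exactly the partitions in $\mathcal{Q}_2^{(d)}(\lambda)$, their total size is unchanged, so $\sum_i|\lambda_i|=\sum_i\big|\lambda^{(d)}_i\big|$. As $\lambda$ is odd, Theorem~\ref{thm:oddcriterion} gives $\sum_i\big|\lambda^{(d)}_i\big|=\big\lfloor n/2^d\big\rfloor$. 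Substituting yields the key identity
\begin{gather*}
|C_{2^d}(\lambda)|=n-2^d\big\lfloor n/2^d\big\rfloor,
\end{gather*}
that is, $|C_{2^d}(\lambda)|$ is the remainder of $n$ upon division by $2^d$.

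For part~(2), suppose $\lambda$ is $d$-good. Condition~(i) of Definition~\ref{def:good} reads $n\equiv 2^d-1\pmod{2^{d+1}}$, which in particular forces $n\equiv 2^d-1\pmod{2^d}$; hence the remainder of $n$ modulo $2^d$ is $2^d-1$. By the key identity this remainder is precisely $|C_{2^d}(\lambda)|$, so $C_{2^d}(\lambda)$ is a partition of $2^d-1$, as claimed. Note that this argument uses only condition~(i) together with the oddness of $\lambda$, not condition~(ii).

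For part~(1), fix $d\le 2$. Since being $d$-good is the conjunction of~(i) and~(ii), one implication is immediate, and it suffices to show that for odd $\lambda$ condition~(i) alone forces condition~(ii). Assuming~(i), the argument of part~(2) gives $|C_{2^d}(\lambda)|=2^d-1\le 3$. It then remains only to observe that every partition of size at most $3$ is a hook partition: the partitions of $0,1,2,3$ are $\emptyset$, $(1)$, $(2)$, $(1^2)$, $(3)$, $(2,1)$, $(1^3)$, each of which is a hook (with the empty partition regarded as the trivial hook). Therefore $C_{2^d}(\lambda)$ is a hook, condition~(ii) holds automatically, and $\lambda$ is $d$-good.

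The computation is elementary once the key identity is in place, so I do not expect a genuine obstacle; the only step requiring care is the identification $|C_{2^d}(\lambda)|=n-2^d\big\lfloor n/2^d\big\rfloor$, which rests on Theorem~\ref{thm:oddcriterion} and on matching the $2^d$-quotient appearing in the size relation with the $d$th row $\mathcal{Q}_2^{(d)}(\lambda)$ of the $2$-quotient tower (these agree as multisets, hence in total size). The restriction $d\le 2$ in part~(1) is sharp: a partition of $2^3-1=7$ need not be a hook, so no analogous statement can hold for $d\ge 3$.
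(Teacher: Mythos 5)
Your proof is correct, and its overall skeleton matches the paper's: both arguments reduce the lemma to showing that an odd partition satisfying condition (i) of Definition~\ref{def:good} automatically has $|C_{2^d}(\lambda)|=2^d-1$, and both then settle part (1) by observing that every partition of $0$, $1$ or $3$ is a hook. Where you genuinely differ is in how that core size is computed. The paper argues by hook removal: writing $|\lambda|=(2^d-1)+m$ with all binary digits of $m$ at least $2^{d+1}$, it notes that the hooks of the odd partition $\lambda$ corresponding to the binary digits of $m$ decompose into $2^d$-hooks, so removing them does not change $C_{2^d}(\lambda)$ and leaves a partition of $2^d-1$. You instead establish the identity $|C_{2^d}(\lambda)|=|\lambda|-2^d\big\lfloor |\lambda|/2^d\big\rfloor$ for an \emph{arbitrary} odd partition, by combining the core--quotient size relation from Section~\ref{background} with the final claim of Theorem~\ref{thm:oddcriterion}, using that $\mathcal{Q}_2^{(d)}(\lambda)$ and $Q_{2^d}(\lambda)$ agree up to reordering and hence in total size. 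This is a cleaner and slightly more general route: your identity holds for every odd partition, with condition (i) entering only at the very end to evaluate the remainder, and it avoids the paper's somewhat informal appeal to the existence and divisibility structure of the ``binary digit'' hooks; the paper's version, in exchange, keeps the explicit hook-removal picture that the rest of the paper works with. One small caveat: your closing sharpness remark (``a partition of $7$ need not be a hook'') does not by itself show $d\le 2$ is necessary in part (1) --- for that one must exhibit an odd partition satisfying (i) whose $2^d$-core fails to be a hook, as the paper does in the proof of Corollary~\ref{surj} with $\lambda=(n-5,2,2)$ and $C_8(\lambda)=(3,2,2)$ --- but since that remark is commentary rather than part of the lemma, it does not affect the correctness of your proof.
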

\begin{proof} If the odd partition $\lambda$ is $d$-good, then $|\lambda|=\big(2^d-1\big) +m $ where the binary digits of $m$ are at least $2^{d+1}$. The hooks of $\lambda$ corresponding to the binary digits of $m$ may be decomposed into $2^d$-hooks and thus do not contribute to $C_{2^{d}}(\lambda)$.
Thus $|C_{2^{d}}(\lambda)|=2^d-1$. This shows~(2). For $d=0,1,2$ we have $|C_{2^d}(\lambda)|=0$, $1$ and $3$, respectively. Since all partitions of 0, 1 and 3 are hook partitions, (1) follows.
\end{proof}

\begin{defi}\label{dnk} If $2^k \le n$, we def\/ine $ d(n,k)=\nu_2\big(\lfloor \frac{n}{2^k}\rfloor\big)$. Thus $d(n,k)$ is the smallest integer $d \ge 0$ satisfying the condition $2^{k+d}\subseteq_2 n$. In particular, $d(n,k)=0$ if and only if $2^{k}\subseteq_2 n$. Moreover, we may write $\big\lfloor \frac{n}{2^k}\big\rfloor=2^{d(n,k)}+m(n,k)$ where $2^{d(n,k)+1} \,|\, m(n,k)$.
\end{defi}

As mentioned in the introduction, the results in \cite{INOT} show that $f_k^n$ is a surjective $(2^k$-\text{to}-$1)$-map whenever $2^k\subseteq_2 n$, i.e., $d(n,k)=0$. In the spirit of \cite[Theorem~2]{APS}, we now give a characterization of the image of the map $f_k^n$ for all~$n$,~$k$ such that $2^k <n$.

\begin{thm}\label{thm5:image:reform} Let $n\in\mathbb{N}$, $k\in\mathbb{N}_0$ be such that $2^k<n$. Let $\lambda\vdash_o n-2^k$. Then $e\big(\lambda, 2^k\big)\neq 0$ if and only if there exists a $d(n,k)$-good partition in the $k$th row of $\mathcal{Q}_2(\lambda)$. In this case, $e\big(\lambda, 2^k\big)=2^k$ if $d(n,k)=0$, and $e\big(\lambda, 2^k\big)=2$ if $d(n,k)>0$.
\end{thm}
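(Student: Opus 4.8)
The plan is to reduce the count $e(\lambda,2^k)$ to the one-node problem governed by Lemma~\ref{ext:1}, using the $k$-data machinery. By Remark~\ref{rem:oddhook} and Lemma~\ref{hookremovedata}, the partitions $\mu\vdash_o n$ with $f_k^n(\mu)=\lambda$ are exactly those odd $\mu$ whose $k$-data agrees with that of $\lambda$ except in the $k$th quotient row, where some $\mu^{(k)}_i$ is obtained from $\lambda^{(k)}_i$ by adding a single node. Since $\mu$ is recovered from its $k$-data (Remark~\ref{recover}) and the lower rows $\mathcal{C}^{(j)}_2$, $j<k$, are shared with $\lambda$ (so condition $(i)$ of Theorem~\ref{thm:oddcriterion} holds automatically for $\mu$), counting such $\mu$ amounts to counting the ways of adding one node to the $k$th quotient row $\big(\lambda^{(k)}_i\big)$ so that the resulting row still satisfies conditions $(ii)$ and $(iii)$ of Theorem~\ref{thm:oddcriterion}. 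Write $a_i=\big|\lambda^{(k)}_i\big|$ and $N=\lfloor n/2^k\rfloor=2^d+m$ with $d=d(n,k)$ and $2^{d+1}\mid m$; the $a_i$ are then pairwise $2$-disjoint with $\sum_i a_i=N-1$.

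First I would determine at which indices $i$ a node may be added so that condition $(iii)$ survives, i.e.\ so that $a_i+1$ together with the remaining $a_j$ stays pairwise $2$-disjoint. This is a binary-carry analysis: since $N-1=\big(2^d-1\big)+m$, the lowest $d$ binary digits of $N-1$ all equal $1$, and $2$-disjointness distributes these digits among the $a_j$. If $d=0$, then $N-1$ is even, every $a_j$ is even, and adding a node is admissible (for condition $(iii)$) at every index. If $d>0$, then adding a node at index $i$ forces a carry compatible with $2$-disjointness precisely when $a_i$ owns all of the bottom $d$ digits of $N-1$, equivalently $\nu_2(a_i+1)=d$; this singles out a unique admissible index. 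I expect this carry bookkeeping to be the main obstacle: one must argue that a partial carry, arising when $a_i$ owns only some of the bottom digits, necessarily collides with a digit owned by another part.

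Next I would apply Lemma~\ref{ext:1} part by part. For an admissible index $i$, the number of odd $\mu^{(k)}_i$ obtained by adding a node to $\lambda^{(k)}_i$ (so that condition $(ii)$ holds) is exactly $e\big(\lambda^{(k)}_i,1\big)$, and by the admissibility computation $\nu_2(a_i+1)=d$. Hence Lemma~\ref{ext:1} gives $e\big(\lambda^{(k)}_i,1\big)\neq 0$ if and only if $\lambda^{(k)}_i$ is $d$-good, with value $1$ when $d=0$ and $2$ when $d>0$. Therefore $e\big(\lambda,2^k\big)=\sum_i e\big(\lambda^{(k)}_i,1\big)$, the sum taken over the admissible indices $i$.

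Finally I would assemble the two cases. When $d=0$, every admissible $\lambda^{(k)}_i$ has even size and is thus $0$-good by Lemma~\ref{d-good}(1), so a $d$-good partition in the $k$th row always exists, each of the $2^k$ indices contributes $1$, and $e\big(\lambda,2^k\big)=2^k$. When $d>0$, an odd partition of size $a_j$ is $d$-good only if $\nu_2(a_j+1)=d$, which by the $2$-disjointness of the $a_j$ again singles out the unique admissible index; thus a $d$-good partition in the $k$th row exists if and only if $\lambda^{(k)}_i$ at that index is $d$-good, in which case $e\big(\lambda,2^k\big)=e\big(\lambda^{(k)}_i,1\big)=2$, and otherwise $e\big(\lambda,2^k\big)=0$. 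This yields the stated dichotomy.
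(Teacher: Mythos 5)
Your proposal is correct and takes essentially the same route as the paper's own proof: both reduce the problem via Remark~\ref{rem:oddhook} and Lemma~\ref{hookremovedata} to adding a single node in the $k$th quotient row, identify the possible index through the same $2$-disjointness/binary-carry arithmetic (forcing $\nu_2\big(\big|\lambda^{(k)}_i\big|+1\big)=d$), and then apply Lemma~\ref{ext:1} componentwise to get $e\big(\lambda,2^k\big)=\sum e\big(\lambda^{(k)}_i,1\big)$ with the same case split on $d=0$ versus $d>0$. The only difference is organizational --- the paper proves the two implications separately whereas you enumerate the valid extensions directly --- which changes nothing of substance.
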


\begin{proof} If $k=0$ then the statement follows from Lemma~\ref{ext:1}. Hence assume that $k\geq 1$. Let $d=d(n,k)$. By assumption $\big\lfloor \frac{n}{2^k}\big\rfloor=2^d+m$, where the binary digits of $m$ are at least~$2^{d+1}$. Thus $\big\lfloor \frac{n-2^k}{2^k}\big\rfloor=\big(2^d-1\big)+m$.

Suppose f\/irst that $e\big(\lambda, 2^k\big)\neq 0$ and that $\mu \vdash_{\mathrm {o}} n$ satisf\/ies $f_k(\mu)=\lambda$. From Remark~\ref{rem:oddhook} and Lemma~\ref{hookremovedata} we get that there exists an $i\in\{0,1,\ldots, 2^k-1\}$ such that $f_0\big(\mu^{(k)}_i\big)=\lambda^{(k)}_i$. Since $\mu^{(k)}_i$ and $\lambda^{(k)}_i$ are odd, we get $e\big(\lambda^{(k)}_i,1\big) \ne 0$. We have that $\big|\lambda^{(k)}_i\big|$ and $\big|\mu^{(k)}_i\big|$ are both 2-disjoint with $m_1:= \sum\limits_{j \ne i} \big|\lambda^{(k)}_j\big|=\sum\limits_{j \ne i} \big|\mu^{(k)}_j\big| \subseteq_2 \big\lfloor \frac{n-2^k}{2^k} \big\rfloor$, by Theorem~\ref{thm:oddcriterion}. Since $m_1 \subseteq_2 \big\lfloor \frac{n-2^k}{2^k}\big\rfloor$ and $m_1 \subseteq_2 \big\lfloor \frac{n}{2^k}\big\rfloor$, we get $m_1 \subseteq_2 m$. Thus $\big|\lambda^{(k)}_i\big|=(2^d-1)+m_2$ and $\big|\mu^{(k)}_i\big|=2^d+m_2$, where $m_2=m-m_1 \subseteq_2 m$. In particular $\nu_2\big(\big|\lambda^{(k)}_i\big|+1\big)=\nu_2\big(\big|\mu^{(k)}_i\big|\big)=d$. Then Lemma~\ref{ext:1} shows that~$\lambda^{(k)}_i$ is $d$-good.

Conversely, if $\lambda^{(k)}_i$ is a $d$-good partition for some $i\in\big\{0,1,\ldots, 2^k-1\big\}$, then there exists a~$\mu^*\vdash_o \big|\lambda^{(k)}_i\big|+1$ such that $f_0(\mu^*)=\lambda^{(k)}_i$, by Lemma~\ref{ext:1}. We let $\mu$ be the partition where the $k$-data $\mathcal{D}^{(k)}_2(\mu )$ and $\mathcal{D}^{(k)}_2(\lambda)$ coincide, except that $\mu^{(k)}_i=\mu^*$. Since $\lambda$ is odd and~$\lambda^{(k)}_i$ is $d$-good, we know that $\big|\lambda^{(k)}_i\big|=\big(2^d-1\big)+m'$ where $m' \subseteq_2 m$, and $\big|\lambda^{(k)}_j\big| \subseteq_2 m-m'$ for all $j\ne i$. Hence $|\mu^*|= \big|\lambda^{(k)}_i\big|+1=2^d+m'$ is 2-disjoint from all $\big|\lambda^{(k)}_j\big|$, $j\ne i$. Thus~$\mu$ is an odd partition of $n$ by Theorem~\ref{thm:oddcriterion}, and $f_k(\mu)=\lambda$ by Lemma~\ref{hookremovedata} and Remark~\ref{rem:oddhook}.

We conclude that $e\big(\lambda,2^k\big)=\sum\limits_{\lambda^{(k)}_i d-{\rm good}} e\big(\lambda^{(k)}_i,1\big)$. If $d=0$ then $\big\lfloor \frac{n-2^k}{2^k}\big\rfloor$ is even. This implies that all $\lambda^{(k)}_i$ are of even cardinality and thus $d$-good. Thus $e\big(\lambda^{(k)}_i,1\big)=1$ for all $i$, and we get $e\big(\lambda, 2^k\big)=2^k$. If $d>0$ there is exactly one $\lambda^{(k)}_i$ in $\mathcal{Q}^{(k)}_2(\lambda)$ of odd cardinality. Only this $\lambda^{(k)}_i$ may be $d$-good and then $e\big(\lambda,2^k\big)=e\big(\lambda^{(k)}_i,1\big)=2$. Otherwise $e\big(\lambda,2^k\big)=0$.
\end{proof}

\begin{cor}\label{thm5:image:reform-nogood}
Let $n\in\mathbb{N}$, $k\in\mathbb{N}_0$ be such that $2^k<n$, and let $d=\nu_2\big(\big\lfloor \frac{n}{2^k}\big\rfloor\big)$. Let $\lambda\vdash_o n-2^k$. Then $e\big(\lambda, 2^k\big)\neq 0$ if and only if there exists a partition $\lambda^{(k)}_i$ in the $k$th row of $\mathcal{Q}_2(\lambda)$ such that $\big|\lambda^{(k)}_i\big| \equiv 2^d-1 \mod 2^{d+1}$, and $C_{2^d}\big(\lambda^{(k)}_i\big)$ is a hook partition. In this case, $e\big(\lambda, 2^k\big)=2^k$ if $d=0$, and $e\big(\lambda, 2^k\big)=2$ if $d>0$.
\end{cor}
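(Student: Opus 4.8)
The plan is to derive this corollary directly from Theorem~\ref{thm5:image:reform} by unfolding the definition of a $d$-good partition. First I would note that, by Definition~\ref{dnk}, the integer $d=\nu_2(\lfloor n/2^k\rfloor)$ used here is exactly $d(n,k)$, so both statements refer to the same value of $d$.

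The substance of the argument is to check that, for the partitions occurring in the $k$th row of the quotient tower, the two numerical conditions listed in the corollary are equivalent to $d$-goodness in the sense of Definition~\ref{def:good}. That definition applies only to odd partitions and imposes the two conditions (i) and (ii); the corollary writes out exactly these same two conditions, namely that $|\lambda^{(k)}_i| \equiv 2^d-1 \mod 2^{d+1}$ and that $C_{2^d}(\lambda^{(k)}_i)$ be a hook partition. The only point to verify is that the oddness requirement built into Definition~\ref{def:good} is automatically met by the partitions $\lambda^{(k)}_i$: since $\lambda$ is an odd partition, Theorem~\ref{thm:oddcriterion}(ii) guarantees that every $\lambda^{(k)}_i$ in $\mathcal{Q}_2^{(k)}(\lambda)$ is itself odd. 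Hence, for such a partition, being $d$-good is the same as satisfying conditions (i) and (ii) alone.

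With this identification, the existence of a $d(n,k)$-good partition in the $k$th row of $\mathcal{Q}_2(\lambda)$, as in Theorem~\ref{thm5:image:reform}, is word-for-word the existence assertion of the corollary, and the multiplicity formulas ($e(\lambda,2^k)=2^k$ when $d=0$, and $e(\lambda,2^k)=2$ when $d>0$) transfer unchanged. I expect no genuine obstacle here; the corollary is essentially a restatement of the theorem, and the only step deserving a moment's attention is the appeal to Theorem~\ref{thm:oddcriterion}(ii) that lets us drop the explicit oddness hypothesis.
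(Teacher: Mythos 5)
Your proposal is correct and matches the paper's intent: the paper states this corollary without proof, treating it as an immediate unfolding of Definition~\ref{def:good} inside Theorem~\ref{thm5:image:reform}. Your one substantive observation---that the oddness hypothesis in the definition of $d$-good is automatic for the $\lambda^{(k)}_i$ by Theorem~\ref{thm:oddcriterion}(ii) since $\lambda$ is odd---is exactly the point that justifies dropping it from the statement.
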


We are now ready to prove Theorem~\ref{TheoremA}. In fact, this is a consequence of Theorem~\ref{thm5:image:reform} and it is stated here as the following corollary.

\begin{cor}[Theorem~\ref{TheoremA}]\label{surj} Let $n\in\mathbb{N}$, $k\in\mathbb{N}_0$ be such that $2^k<n$.
\begin{itemize}\itemsep=0pt
\item If $k=0$ then $f^n_k$ is surjective if and only if $d(n,k) \le 2$.
\item If $k>0$ then $f^n_k$ is surjective if and only if $d(n,k) \le 1$.
\end{itemize}
\end{cor}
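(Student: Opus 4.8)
The starting point is Theorem~\ref{thm5:image:reform}: writing $d=d(n,k)$, the map $f^n_k$ is surjective precisely when every $\lambda\vdash_o n-2^k$ has a $d$-good partition in its $k$th row $\mathcal{Q}^{(k)}_2(\lambda)$. By Theorem~\ref{thm:oddcriterion} together with Remark~\ref{oddconstruct}, the tuples $(\nu_0,\dots,\nu_{2^k-1})$ that actually occur as such $k$th rows are exactly the tuples of odd partitions whose sizes are pairwise $2$-disjoint and sum to $s:=\lfloor (n-2^k)/2^k\rfloor=(2^d-1)+m$, where $2^{d+1}\mid m$. So the first step is to restate surjectivity as: \emph{every} such tuple contains a $d$-good partition. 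The combinatorial heart is the congruence condition $|\nu|\equiv 2^d-1\pmod{2^{d+1}}$ in Definition~\ref{def:good}, which says $|\nu|$ carries exactly the binary digits $2^0,\dots,2^{d-1}$ among the low $d+1$ bits. Since the digits of $s$ are distributed without carries among the $2$-disjoint sizes $|\nu_i|$, and $s$ has digits $2^0,\dots,2^{d-1}$ but not $2^d$, I would record at the outset the key observation that some part meets this congruence if and only if all $d$ low digits $2^0,\dots,2^{d-1}$ are assigned to a single $\nu_i$.

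I would then split according to $k$. For $k=0$ there is a single part $\nu_0=\lambda$, which automatically carries every digit, so the congruence always holds and being $d$-good reduces to condition (ii), namely that $C_{2^d}(\lambda)$ is a hook. When $d\le 2$ one has $|C_{2^d}(\lambda)|=2^d-1\in\{0,1,3\}$, which is always a hook, so by Lemma~\ref{d-good}(1) every odd $\lambda$ is $d$-good and $f^n_0$ is surjective. For $k\ge 1$ and $d\le 1$ there is at most one low digit $2^0$, which necessarily lands in a single part (and when $d=0$ the congruence $|\nu|\equiv 0$ holds for every even-sized part anyway); that part satisfies the congruence and, since $d\le 2$, is $d$-good by Lemma~\ref{d-good}(1), so again every tuple works and $f^n_k$ is surjective.

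For $k\ge 1$ and $d\ge 2$ I would instead build a bad tuple: as $2^k\ge 2$ and $s\ge 3$, take $\nu_0=(1)$ and $\nu_1$ an odd partition of size $s-1$, the remaining parts empty. Then $2^0$ lies in $\nu_0$ and $2^1$ in $\nu_1$, so no part acquires both $2^0$ and $2^1$; hence no part meets the congruence, the tuple has no $d$-good partition, and by Remark~\ref{oddconstruct} it is the $k$th row of some odd partition outside the image. Thus $f^n_k$ is not surjective, completing the case $k\ge 1$: surjective if and only if $d\le 1$.

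The remaining and hardest case is $k=0$, $d\ge 3$, where I must produce an odd $\lambda\vdash n-1$ with $C_{2^d}(\lambda)$ not a hook. Since $|C_{2^d}(\lambda)|=2^d-1<2^d$, every partition of $2^d-1$ equals its own $2^d$-core, so it suffices to exhibit a non-hook odd partition $C$ of size $2^d-1$ and then realize it as $C_{2^d}(\lambda)$: build the $2$-core tower of $\lambda$ by placing the single box of each row $0\le j\le d-1$ as in $C$ and one further box in each row corresponding to a binary digit of $m$; this yields, by Remark~\ref{recover} and Lemma~\ref{macd}, an odd $\lambda$ of size $n-1$ with $C_{2^d}(\lambda)=C$. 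The existence of a non-hook odd $C$ of size $2^d-1$ is the crux, and I would settle it by counting: the odd partitions of $2^d-1$ number $\prod_{j=0}^{d-1}2^{j}=2^{\binom{d}{2}}$, whereas the odd hooks of that size, being the $(2^d-1-r,1^r)$ with $\binom{2^d-2}{r}$ odd, number only $2^{d-1}$; for $d\ge 3$ one has $\binom{d}{2}>d-1$, forcing a non-hook (for instance $(4,2,1)$ when $d=3$). This gives $k=0$ surjective if and only if $d\le 2$, which is exactly the claimed dichotomy. I expect the only delicate point to be the bookkeeping in this final construction — checking that the prescribed tower does produce an odd partition of the correct size whose $2^d$-core is the intended non-hook.
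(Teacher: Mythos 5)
Your proof is correct, and for most of the case analysis it coincides with the paper's: both arguments reduce, via Theorem~\ref{thm5:image:reform} and Theorem~\ref{thm:oddcriterion}/Remark~\ref{oddconstruct}, to asking whether every admissible $k$th quotient row contains a $d$-good partition; both settle the surjective cases ($k=0$, $d\le 2$ and $k\ge 1$, $d\le 1$) with Lemma~\ref{d-good}(1); and your bad tuple $\big((1),\nu_1,(0),\dots,(0)\big)$ for $k\ge 1$, $d\ge 2$ is exactly the paper's construction --- the paper phrases the obstruction as $3\subseteq_2 2^{d}-1\subseteq_2|\mu|$ for any $d$-good $\mu$, which is precisely your ``no part gets both $2^0$ and $2^1$''. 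The one place you genuinely diverge is non-surjectivity for $k=0$, $d\ge 3$: the paper exhibits a single explicit witness $\lambda=(n-5,2,2)\vdash_o n-1$, notes $C_8(\lambda)=(3,2,2)$ is not a hook, and concludes $C_{2^d}(\lambda)$ is not a hook (since the $8$-core of a hook partition is again a hook), so one partition works uniformly for all $d\ge 3$. You instead prove \emph{existence} of a non-hook odd partition $C$ of $2^d-1$ by a Macdonald-type count --- $2^{\binom{d}{2}}$ odd partitions versus $2^{d-1}$ odd hooks, the count being exactly what Remark~\ref{oddconstruct} gives with $k=d$ --- and then lift $C$ to an odd partition of $n-1$ via the $2$-core tower. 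Your route is more conceptual (it shows non-hook odd partitions are abundant rather than producing one), but it carries an extra obligation the paper's choice avoids: the lifting step needs the fact that $C_{2^d}(\lambda)$ is the partition whose $2$-core tower is the truncation of that of $\lambda$ to rows $0,\dots,d-1$. This is not stated in the paper as such, though it follows by iterating Lemma~\ref{hookremovedata}: each $2^d$-hook removal deletes a node from a row-$d$ quotient partition while fixing rows $0,\dots,d-1$ of the core tower, and the $2^d$-core is reached exactly when row $d$ becomes empty. With that fact made explicit (you correctly flag it as the delicate point), your argument is complete.
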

\begin{proof} By Theorem \ref{thm5:image:reform}, $f^n_k$ is surjective if and only if for all $\lambda \vdash_o n-2^k$ we have that the $k$th row of $\mathcal{Q}_2(\lambda)$ contains a~$d(n,k)$-good partition $\lambda^{(k)}_j$. By Theorem~\ref{thm:oddcriterion} and Def\/inition~\ref{dnk}, for any
$\lambda \vdash_o n-2^k$ we have $ \sum\limits_{j \ge 0} \big|\lambda^{(k)}_j\big|=\big\lfloor \frac{n-2^k}{2^k} \big\rfloor = \big(2^{d(n,k)}-1\big)+m(n,k)$.

If $k=0$ then $\mathcal{Q}^{(0)}_2(\lambda)$ contains only $\lambda=\lambda^{(0)}_0$. Hence $f^n_0$ is surjective if and only all odd partitions of $n-1$ are $d(n,0)$-good. By Lemma~\ref{d-good}(1), the latter condition holds when $d=d(n,0) \le 2$. On the other hand, if $d=\nu_2(n)>2$, then $\lambda=(n-5,2,2)$ is an odd partition of $n-1$ by Theorem~\ref{thm:oddcriterion}, but $C_8(\lambda)=(3,2,2)$ is not a hook, and hence $C_{2^d}(\lambda)$ is not a~hook. So~$\lambda$ is not $d$-good, and thus $f^n_0$ is not surjective.

Now assume $k\geq 1$. Then $\mathcal{Q}^{(k)}_2(\lambda)$ contains at least two odd partitions. If $d(n,k) \ge 2$ then any $d(n,k)$-good partition $\mu$ satisf\/ies $3 \subseteq_2 2^{d(n,k)}-1 \subseteq_2 |\mu|$. Write $\big\lfloor \frac{n-2^k}{2^k} \big\rfloor =1+m_1$ where $m_1$ is even. Applying Remark~\ref{oddconstruct}, take any $\lambda\vdash_o n-2^k$ such that $\big|\lambda^{(k)}_0\big|=1$ and $\lambda^{(k)}_1$ is an odd partition with $\big|\lambda^{(k)}_1\big|=m_1$. Then no partition in $\mathcal{Q}^{(k)}_2(\lambda) $ is $d(n,k)$-good. Thus $f^n_k$ is not surjective. On the other hand, if $d(n,k)=0$ then $2^k \subseteq_2 n$ and $f^n_k$ is surjective \cite[Proposition~4.5]{INOT}. If $d(n,k)=1$ then $\big\lfloor \frac{n-2^k}{2^k} \big\rfloor = 1+m(n,k)$, where $4 \mid m(n,k)$. Thus any $\mathcal{Q}^{(k)}_2(\lambda) $ contains a partition with odd cardinality; this partition is 1-good, by Lemma~\ref{d-good}. Again $f^n_k$ is surjective.
\end{proof}

It is an immediate consequence of Theorem \ref{thm5:image:reform} that $f_k^n$ is regular on its image for all relevant choices of $n,k$ such that $2^k < n$. We have:

\begin{cor}\label{cor:reg}
Let $n\in\mathbb{N}$, $k\in\mathbb{N}_0$ be such that $2^k < n$; set $d=\nu_2\big(\big\lfloor \frac{n}{2^k}\big\rfloor\big)$. Let $\lambda\vdash_o n-2^k$. Then
\begin{gather*}
e\big(\lambda, 2^k\big)= \begin{cases}
2^k & \text{if } d=0;\\
2 &\text{if } d>0,\ \text {and the}\ k\text{th row of}\ \mathcal{Q}_2(\lambda) \text{ contains a } d\text{-good partition};\\
0 &\text{otherwise. }\\
\end{cases}
\end{gather*}
\end{cor}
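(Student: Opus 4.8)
The plan is to obtain Corollary~\ref{cor:reg} essentially as a direct translation of Theorem~\ref{thm5:image:reform}, since that theorem already packages all the arithmetic work. First I would fix $n$, $k$ with $2^k<n$, set $d=\nu_2\big(\big\lfloor\frac{n}{2^k}\big\rfloor\big)=d(n,k)$, and take an arbitrary $\lambda\vdash_o n-2^k$. The claim is a case distinction on $d$, and the entire content is the value of $e\big(\lambda,2^k\big)$, so I would simply invoke Theorem~\ref{thm5:image:reform} and read off each branch.

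For the case $d=0$, Theorem~\ref{thm5:image:reform} asserts both that $e\big(\lambda,2^k\big)\neq 0$ precisely when some $d$-good partition sits in the $k$th row $\mathcal{Q}_2^{(k)}(\lambda)$, and that in that case $e\big(\lambda,2^k\big)=2^k$. So the only thing to check is that when $d=0$ the ``good partition exists'' condition is automatic, i.e.\ that $e\big(\lambda,2^k\big)$ is never $0$. This is already shown inside the proof of Theorem~\ref{thm5:image:reform}: when $d=0$ one has $\big\lfloor\frac{n-2^k}{2^k}\big\rfloor$ even, forcing every $\lambda_i^{(k)}$ to have even cardinality and hence (by Lemma~\ref{d-good}(1), using $0\equiv 2^0-1\bmod 2$) to be $0$-good. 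Thus the first branch of the corollary holds unconditionally, and I would cite Theorem~\ref{thm5:image:reform} together with this observation.

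For $d>0$, Theorem~\ref{thm5:image:reform} gives $e\big(\lambda,2^k\big)=2$ exactly when $\mathcal{Q}_2^{(k)}(\lambda)$ contains a $d$-good partition, and otherwise $e\big(\lambda,2^k\big)=0$. These are precisely the second and third branches of the corollary statement, so no additional argument is needed beyond quoting the theorem. I would note that when $k=0$ there is a single partition $\lambda=\lambda_0^{(0)}$ in the relevant row, and the statement reduces to Lemma~\ref{ext:1}, so the formula is consistent there as well.

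Since every branch is an immediate restatement, I do not expect a genuine obstacle; the only point requiring care is the remark that in the $d=0$ case the existence of a $d$-good partition in $\mathcal{Q}_2^{(k)}(\lambda)$ is automatic, so that the three cases of the corollary are exhaustive and the ``otherwise'' clause is empty when $d=0$. A clean writeup would therefore consist of one sentence setting up the notation, one invocation of Theorem~\ref{thm5:image:reform} for each value regime of $d$, and the short parenthetical justifying that the $d=0$ branch is unconditional.
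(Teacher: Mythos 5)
Your proposal is correct and follows exactly the paper's route: the paper presents Corollary~\ref{cor:reg} as an immediate consequence of Theorem~\ref{thm5:image:reform}, with no further argument. Your one point of care --- that for $d=0$ the existence of a $0$-good partition in $\mathcal{Q}_2^{(k)}(\lambda)$ is automatic, since $\big\lfloor\frac{n-2^k}{2^k}\big\rfloor$ is even and the pairwise $2$-disjoint cardinalities $\big|\lambda^{(k)}_i\big|$ must then all be even --- is precisely the observation made inside the paper's proof of that theorem, so nothing is missing.
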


\begin{exa}\label{example2} For an illustration, we consider odd extensions of odd partitions by a $4$-hook, i.e., we take $k=2$ above. For $n>2^2$ we f\/irst compute $d(n,k)=\nu_2\big(\big\lfloor \frac{n}{2^k}\big\rfloor\big)$, and then consider odd partitions of $n-4$ and their 4-extensions. For $n=6$, $d(6,2)=0$. Thus $e((2),4)=4$. The odd 4-extensions of $(2)$ are $(6)$, $\big(3^2\big)$, $\big(2^2,1^2\big)$, $\big(2,1^4\big)$.
For $n=10$, $d(10,2)=1$. In this case, $e(\lambda,4)=2$ for all odd partitions $\lambda$ of 6. For instance, the odd 4-extensions of $(6)$ are $(10)$ and $(6,3,1)$. For $n=19$, $d(19,2)=2$. Example~\ref{example1} shows that for $\lambda=\big(5,4,2^2,1^2\big) \vdash_o 15$
there is no 2-good partition in $\mathcal{Q}_2^{(2)}(\lambda)$, hence $e(\lambda,4)=0$.
\end{exa}

\section[Deciding commutativity of the maps $f_k$ and $f_{\ell}$]{Deciding commutativity of the maps $\boldsymbol{f_k}$ and $\boldsymbol{f_{\ell}}$}

Let $n\in\mathbb{N}$, and suppose that $0\le k <\ell$ satisfy $2^k+2^{\ell}\leq n$. As stated in the introduction, we want to complete the discussion of the commutativity of the maps $f_k$ and $f_\ell$. Since the relevant~$n$ will always be apparent for the maps $f^n_k$ in this section, we just write~$f_k$.
	
We write $(n;k,\ell) \in \mathcal{T} $ if for all $\lambda \vdash_o n$ we have $f_kf_{\ell}(\lambda)=f_{\ell}f_k(\lambda)$. Otherwise we write $(n;k,\ell) \in \mathcal{F}$.

In this section we will prove Theorem~\ref{TheoremB}, which may be reformulated as follows.

\begin{thm}\label{them:Aref}
Let $n=2^t+m$ where $0 \le m < 2^t$. Suppose that $k$, $\ell$ satisfy $0 \leq k<\ell $ and $2^k+2^{\ell}\leq n$. Then {\it with the exception of} $(6;0,1)$
\begin{gather*}
\text{$(n;k,\ell) \in \mathcal{F}$ if and only if $\ell <t$ and $2^k \leq m$}.
\end{gather*}
\end{thm}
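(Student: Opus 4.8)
The plan is to work entirely with the combinatorial model of odd partitions afforded by the $2$-core tower. By Lemma~\ref{macd}, an odd partition $\lambda\vdash_o n$ is the same data as a choice, for each $r$ with $2^r\subseteq_2 n$, of a position $p_r\in\{0,\dots,2^r-1\}$ recording the unique slot carrying the single box in the $r$th row $\mathcal{C}^{(r)}_2(\lambda)$; the occupied rows are exactly those $r$ with $2^r\subseteq_2 n$. On this model I would first record an explicit description of each $f_k$. Using Remark~\ref{rem:oddhook} together with Lemma~\ref{hookremovedata}, $f_k$ removes a $1$-hook from a single partition $\lambda^{(k)}_{i_0}$ in row $k$ of the quotient tower, namely the unique slot $i_0$ whose size carries the lowest bit of $\lfloor n/2^k\rfloor$; equivalently $i_0$ is read off the position $p_{k+d}$ of the least digit $2^{k+d}$ of $n$ with $k+d\ge k$, where $d=d(n,k)$ as in Definition~\ref{dnk}. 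The move on row $k$ is then $f_0$ applied to $\lambda^{(k)}_{i_0}$, whose effect is governed by Lemma~\ref{ext:1}: when $d(n,k)=0$ it is a clean deletion of the digit $2^k$, while when $d(n,k)>0$ it is a \emph{borrow}, deleting $2^{k+d}$ and redistributing it across the newly created digits $2^k,\dots,2^{k+d-1}$; in both cases the positions in the rows inside the subtree of $i_0$ are transformed by an explicit shift-and-complement rule coming from the $2$-quotient bijection. The single most useful translation is that the hypothesis $2^k\le m$ is exactly the statement that $n$ has a binary digit $2^r$ with $k\le r<t$, while $\ell<t$ says $2^\ell$ is not the top digit.

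Next I would reduce the commutativity statement to this data. Both composites start from $\lambda\vdash_o n$ and land in $\Irr_{2'}(\fS_{n-2^k-2^\ell})$; since they target the same integer they occupy the same rows of the core tower, so $(n;k,\ell)\in\mathcal{T}$ is equivalent to the two orders inducing the same position in every surviving row. The case $\ell=t$ is already settled: it is \cite[Proposition~4.3]{INOT} (the regime $2^\ell<n<2^{\ell+1}$), so $(n;k,t)\in\mathcal{T}$ always, and I may assume $\ell<t$ henceforth. It is worth noting, via Lemma~\ref{Lemma 4}, that $f_t$ peels off the top digit $2^t$ and commutes with both $f_k$ and $f_\ell$; this recursive structure organizes the analysis but, as the examples below show, it does \emph{not} by itself force $f_k$ and $f_\ell$ to commute, because the position transformations entangle the higher rows.

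For the commuting direction with $\ell<t$, suppose $2^k>m$. Then $n$ has no digit in $[k,t)$, so $d(n,k)=t-k$ and $d(n,\ell)=t-\ell$, and both maps borrow from the \emph{same} top digit $2^t$. I would compute both orders directly on the core tower: in $f_kf_\ell$ the map $f_\ell$ first cascades $2^t$ into rows $\ell,\dots,t-1$ and then $f_k$, now meeting $2^\ell$ as its least available digit, cascades that into rows $k,\dots,\ell-1$; in $f_\ell f_k$ the map $f_k$ cascades $2^t$ all the way into rows $k,\dots,t-1$ and then $f_\ell$ simply deletes the freshly created digit $2^\ell$. Tracking positions through the $2$-quotient bijection shows both orders fill the rows $\{k,\dots,t-1\}\setminus\{\ell\}$ from the same bits of $p_t$, so the composites agree and $(n;k,\ell)\in\mathcal{T}$.

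The heart of the proof is the non-commuting direction: $\ell<t$ and $2^k\le m$, i.e.\ $n$ has a digit $2^r$ with $k\le r<t$. Here the two orders read this intermediate digit differently: applying $f_\ell$ first changes the size to $n-2^\ell$, which can alter $d(\,\cdot\,,k)$ and hence which digit $f_k$ then borrows from and how its cascade repositions the surviving boxes, and symmetrically for the other order. I would exhibit an explicit odd $\lambda$ — choosing the $p_r$ so that the box $f_k$ relocates lies in the subtree touched by $f_\ell$ — for which the two composites place the single box of a surviving row in different slots; the computation $(10;1,2)$, where the two orders send a suitable $\lambda\vdash_o 10$ to $(4)$ and to $(2,1,1)$, is the prototype. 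The main obstacle is exactly this bookkeeping: one must pin down the explicit shift-and-complement law by which $f_k$ permutes the positions in the rows above $k$ (this already makes clean deletions fail to commute, as at $(11;0,1)$) and then verify that for \emph{every} admissible $(n;k,\ell)$ in this regime the induced positions genuinely disagree. Finally the degenerate instance $n=6$, $k=0$, $\ell=1$ must be excised: there $t=2$, $m=2$, the only intermediate digit $2^1$ sits in a row with just two slots, the competing cascades collapse to the same position, and a direct check on the eight odd partitions of $6$ confirms $(6;0,1)\in\mathcal{T}$. This single coincidence is the source of the exception in Theorem~\ref{them:Aref}.
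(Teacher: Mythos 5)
Your framework (the $2$-core tower with one occupied slot per binary digit of $n$, and $f_k$ acting as a deletion or a borrow-and-cascade) is sound and is indeed the machinery underlying the paper. The commuting cases are in order: $\ell=t$ is \cite[Proposition~4.3]{INOT} as you say, and your analysis of $2^k>m$ (both orders borrow from the top digit $2^t$) is a plausible, if only sketched, alternative to the paper's induction on $k$ through the $2$-quotient (Lemma~\ref{Lemma 3}, with base case the hook-partition computation at $n=2^t$, Lemma~\ref{Lemma 2}). The genuine gap is in what you yourself call the heart of the proof: the non-commutativity direction. For $\ell<t$ and $2^k\le m$ you say you ``would exhibit an explicit odd $\lambda$'' and then concede that one must ``verify that for \emph{every} admissible $(n;k,\ell)$ in this regime the induced positions genuinely disagree.'' That verification \emph{is} the theorem; no general witness family is constructed, no mechanism is given for handling infinitely many $(n;k,\ell)$, and the shift-and-complement law on which all of your position-tracking rests is never pinned down. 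Your treatment of the exception $(6;0,1)$ has the same character: ``the competing cascades collapse to the same position'' describes the outcome of a direct check rather than something your framework predicts, and nothing in your argument shows this is the \emph{only} such collapse.

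The paper closes exactly this gap with two ingredients you are missing. First, Lemma~\ref{Lemma 5} settles $k=0$ by an explicit three-case construction of witnesses (for $2^\ell<m$ the partition $\big(m,m,1^a\big)$; for $m<2^\ell$ the partition $\big(n-2^\ell,m+1,1^a\big)$; for $m=2^\ell$ separate partitions according to $\ell\ge 2$ or $\ell=1$), each verified via the hook criterion of Lemma~\ref{Lemma 4}; the small cases $n\le 8$, including the exception, are checked directly. Second, and crucially, Lemma~\ref{Lemma 6} is a doubling lemma: if $\mu\vdash_o n$ witnesses $(n;k,\ell)\in\mathcal{F}$, then the odd partition $\lambda$ of $2n$ or $2n+1$ with $Q_2(\lambda)=(\mu,(0))$ witnesses $(2n;k+1,\ell+1)\in\mathcal{F}$ and $(2n+1;k+1,\ell+1)\in\mathcal{F}$. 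This turns the $k=0$ case into the general case by induction on $k$, since every admissible $(n;k,\ell)$ with $k\ge 1$ descends to $\big(\lfloor n/2\rfloor;k-1,\ell-1\big)$; the only leakage is through the exceptional $(6;0,1)$, whose descendants $(12;1,2)$ and $(13;1,2)$ the paper checks by hand (e.g., with $(6,4,2)\vdash_o 12$ and $(6,4,3)\vdash_o 13$). Your core-tower model is precisely the right setting in which such a lifting lemma is easy to prove, but since you never formulate it, your proposal has no inductive structure and the hard direction remains unproven.
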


The proof of Theorem \ref{them:Aref} is based on a series of lemmas. The f\/irst lemmas concern two \textit{extreme} cases, where $f_k$ and~$f_{\ell}$ commute.

In the case $\ell=t$ we have the following result as a~reformulation of \cite[Proposition~4.3]{INOT}.

\begin{lem}\label{Lemma 1}
Let $n=2^t+m$ with $0 \le m < 2^t$. If $2^k \leq m$, then $(n; k, t) \in \mathcal{T}$.
\end{lem}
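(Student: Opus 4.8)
The plan is to reduce the identity $f_kf_t=f_tf_k$ to a statement about two operations acting on the $k$th row $\mathcal{Q}_2^{(k)}(\lambda)$ of the $2$-quotient tower, using the fact that both hook removals can be read off from that row. Throughout, write $S=\lfloor n/2^k\rfloor=2^{t-k}+\lfloor m/2^k\rfloor$; since $2^k\le m$ we have $1\le\lfloor m/2^k\rfloor<2^{t-k}$, so $2^{t-k}$ is the top binary digit of $S$ while the lowest binary digit $2^s$ of $S$ satisfies $s<t-k$. By Theorem~\ref{thm:oddcriterion} the sizes $|\lambda^{(k)}_i|$ are pairwise $2$-disjoint and partition the binary digits of $S$. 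First I would record the two translations, both consequences of Lemma~\ref{hookremovedata} applied iteratively: $f_t$ removes the single box occupying row $t$, which lies under a unique entry $\lambda^{(k)}_a$ of row $k$ (the one carrying the digit $2^{t-k}$), and this amounts to removing a $2^{t-k}$-hook from $\lambda^{(k)}_a$; while $f_k$ removes the odd $1$-hook from the unique entry $\lambda^{(k)}_{j_0}$ carrying the lowest digit $2^s$ of $S$. Both maps leave every other entry of row $k$, and all rows $\mathcal{C}_2^{(j)}$ with $j<k$, unchanged.

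I would then split according to whether $a=j_0$. If $a\ne j_0$, the two operations act on distinct entries of row $k$. Removing the $2^{t-k}$-hook at $a$ deletes the top digit of $S$ but not the lowest, so $j_0$ remains the $f_k$-target; removing the odd box at $j_0$ changes only low digits, so $a$ remains the $f_t$-target. Hence both orders delete the same $1$-hook from $\lambda^{(k)}_{j_0}$ and the same $2^{t-k}$-hook from $\lambda^{(k)}_a$, producing identical $k$th rows. Since the rows $\mathcal{C}_2^{(j)}$, $j<k$, agree and a partition is recovered from this $k$-data together with the common unchanged deeper tower (Remark~\ref{recover}), the two composites coincide. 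If instead $a=j_0$, both maps act on the same entry $\rho:=\lambda^{(k)}_a$, and the claim reduces to $f_0f_{t-k}(\rho)=f_{t-k}f_0(\rho)$, where $2^{t-k}$ is the top digit of $|\rho|$ and $1\le|\rho|-2^{t-k}$ (as $\rho$ carries both digits $2^{t-k}$ and $2^s$). This is an instance of the lemma itself; for $k\ge 1$ we have $|\rho|\le S<n$, so it follows by induction on $n$.

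The hard part will be the base of this induction, namely the case $k=0$: there row $k$ is just $\{\lambda\}$, so one is always in the entangled situation $a=j_0$ with $\rho=\lambda$ and no further tower reduction is available. For $k=0$ I would argue directly that the unique $2^t$-hook removed by $f_t$ (Lemma~\ref{Lemma 4}) and the odd box removed by $f_0$ are disjoint and that neither removal alters the other's identity, via the abacus/$2$-core-tower description of odd partitions; the delicate point is precisely that a single-box removal does not localise to one entry of a lower row, so the bookkeeping must be carried out on the full $2$-core tower. Alternatively, and most economically, one may note that the whole statement is the case $2^t<n<2^{t+1}$ of \cite[Proposition~4.3]{INOT} rephrased in the present notation (the hypothesis $2^k+2^t\le n$ being exactly $2^k\le m$), and simply invoke that result, which in particular settles the base case.
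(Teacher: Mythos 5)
The paper gives no proof of this lemma at all: it is introduced with the words ``In the case $\ell=t$ we have the following result as a reformulation of \cite[Proposition~4.3]{INOT}.'' That citation is the paper's entire argument. Since the hypothesis $2^k\le m$ forces $m\ge 1$, the condition $0\le m<2^t$ becomes $2^t<n<2^{t+1}$, and $2^k+2^t\le n$ is exactly $2^k\le m$; so the lemma is literally \cite[Proposition~4.3]{INOT} with $\ell=t$, as you observe in your final sentence. Your ``most economical'' alternative is therefore not merely a way to settle the base case --- it is the whole proof, it coincides with what the paper does, and it renders the rest of your argument logically redundant.

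Your direct argument via the $2$-quotient tower is a genuinely different route, and its superstructure is sound: the identification of the $f_t$-target as the entry of row $k$ carrying the digit $2^{t-k}$ (which by Lemma~\ref{Lemma 4} has a unique $2^{t-k}$-hook whose removal is odd), the identification of the $f_k$-target as the entry carrying the lowest digit of $S$ (forced by $2$-disjointness in Theorem~\ref{thm:oddcriterion}), the analysis of the case $a\ne j_0$ via Remark~\ref{recover}, and the reduction of the case $a=j_0$ with $k\ge 1$ to a $k=0$ instance at smaller $n$ are all correct. But this does not amount to an independent proof, because everything hinges on the base case $k=0$ --- which is precisely where the content of \cite[Proposition~4.3]{INOT} lies --- and the plan you sketch for it rests on false claims. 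The unique $2^t$-hook and the odd box need not be disjoint, and neither removal preserves the ``identity'' of the other hook. For instance, take $\lambda=(5)\vdash_o 5$, so $t=2$, $m=1$, $k=0$: the unique $4$-hook consists of the cells $(1,2),\dots,(1,5)$, while the odd box is $(1,5)$, which lies \emph{inside} that hook; moreover, after removing the $4$-hook the odd box of $(1)$ is $(1,1)$, and after removing the odd box the unique $4$-hook of $(4)$ is a different set of cells. Commutativity does hold here --- both composites give the empty partition $(0)$ --- but not for the reason you propose, and the same failure occurs for every one-row partition $(n)$ with $m\ge 1$. So as a self-contained argument your proof has a genuine gap at $k=0$ (closing it would essentially mean reproving INOT's proposition); as submitted, it is correct only through the citation, which is exactly the paper's own treatment.
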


It is also known that in the case where $n$ is a power of $2$, the maps $f_k$ and $f_{\ell}$ commute \cite[Remark~4.4]{INOT}, and we include a short proof here.

\begin{lem}\label{Lemma 2} If $n=2^t $ then $(n;k,\ell) \in \mathcal{T}$ for all $k$, $\ell$.
\end{lem}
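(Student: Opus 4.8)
The plan is to prove that when $n=2^t$ is a power of $2$, the maps $f_k$ and $f_\ell$ commute by working entirely in the $2$-quotient tower and reducing the composite operations to local modifications of individual rows. The key observation is that for $n=2^t$, an odd partition $\lambda$ is highly constrained: by Theorem~\ref{thm:oddcriterion}, since the only binary digit of $n$ is $2^t$ itself, we have $\big\lfloor \tfrac{n}{2^j}\big\rfloor = 2^{t-j}$ for each $j\le t$, so $d(n,j)=t-j$. I would first record what surjectivity/fibre data this forces: removing a $2^k$-hook and then a $2^\ell$-hook (in either order) is governed by Lemma~\ref{hookremovedata}, which says each such removal changes exactly one partition in the relevant row of the quotient tower by a single $1$-hook, leaving all the $2$-core rows (and the other quotient entries) untouched.

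The main step is to translate the two composites $f_k f_\ell(\lambda)$ and $f_\ell f_k(\lambda)$ into operations on the rows $\mathcal{Q}_2^{(k)}$ and $\mathcal{Q}_2^{(\ell)}$ of the quotient tower. By Lemma~\ref{hookremovedata}, applying $f_\ell$ removes an odd $1$-hook from a unique entry $\mu^{(\ell)}_j$ in row $\ell$, and applying $f_k$ removes an odd $1$-hook from a unique entry in row $k$. Since $k<\ell$, row $k$ sits \emph{above} row $\ell$, and each entry $\lambda^{(k)}_i$ has its own $2$-quotient feeding into two consecutive entries of row $\ell$; thus I would identify precisely which entry in each row is affected and check that the two modifications act on genuinely independent pieces of the tower data. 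The cleanest formulation is to pass to a deep row $\mathcal{Q}_2^{(\ell)}(\lambda)$: here $f_\ell$ acts directly by removing a $1$-hook from one entry, while $f_k$ acts on row $\ell$ by removing a $2^{\ell-k}$-hook from one entry (since an odd $2^k$-hook of $\lambda$ corresponds, under the iterated bijection of \cite[Theorem~3.3]{OlssonBook}, to a $2^{\ell-k}$-hook in row $\ell$). Because $n=2^t$ forces all the cardinalities $|\lambda^{(\ell)}_i|$ to be pairwise $2$-disjoint binary subsums of $2^{t-\ell}$, the ``odd hook'' selected by $f_k$ and the one selected by $f_\ell$ land in \emph{different} entries of row $\ell$ whenever they could conflict, or else commute trivially within a single entry.

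The heart of the argument is therefore to show that these two local operations commute. I would split into cases according to whether $f_k$ and $f_\ell$ touch the same entry of row $\ell$ or different entries. If they touch different entries, commutativity is immediate since the operations are on disjoint data. If they touch the same entry $\lambda^{(\ell)}_i$, then I must verify that removing an odd $1$-hook and removing an odd $2^{\ell-k}$-hook from that single odd partition commute — and here the power-of-two hypothesis again pins down the cardinality and the $2$-core structure tightly enough (via Theorem~\ref{thm:oddcriterion} applied one level deeper) that the odd hooks are uniquely determined and independent. The uniqueness clause of Remark~\ref{rem:oddhook} (that an odd partition has a \emph{unique} odd $2^j$-hook whose removal stays odd) is what guarantees there is no ambiguity in which hook gets removed at each stage.

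The step I expect to be the main obstacle is the case analysis showing that the two selected odd hooks never interfere destructively when they fall in the same entry of the quotient tower — i.e., that the uniquely-determined odd $1$-hook and odd $2^{\ell-k}$-hook of a single odd partition of a power-of-two-related size can be removed in either order with the same result. This requires carefully invoking Theorem~\ref{thm:oddcriterion} at the appropriate depth to see that the cardinalities remain $2$-disjoint after each removal, so that the second map's odd-hook choice is forced and consistent regardless of order. Once that local commutation is established, assembling it back up the tower via Lemma~\ref{hookremovedata} and Remark~\ref{recover} yields $f_kf_\ell(\lambda)=f_\ell f_k(\lambda)$ for every odd $\lambda\vdash n$, i.e., $(n;k,\ell)\in\mathcal{T}$.
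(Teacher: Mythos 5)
There is a genuine gap, and also a structural error in the setup. First, the error: you claim that an odd $2^k$-hook of $\lambda$ ``corresponds, under the iterated bijection, to a $2^{\ell-k}$-hook in row $\ell$'' of the quotient tower. This is backwards. The bijection of \cite[Theorem~3.3]{OlssonBook} sends hooks of $\lambda$ whose length is divisible by $2^j$ to hooks in row $j$; a $2^k$-hook with $k<\ell$ has length \emph{not} divisible by $2^\ell$, so it corresponds to no hook at all in row $\ell$. (It is $f_\ell$ that appears as a $2^{\ell-k}$-hook removal in the shallower row $k$.) Worse, removing a $1$-hook from a row-$k$ entry changes that entry's $2$-core and its entire deeper tower, so $f_k$ does not act on row $\ell$ by any local hook operation. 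If you repair this by working in row $k$ instead (where $f_k$ is a $1$-hook removal and $f_\ell$ is a $2^{\ell-k}$-hook removal), a second problem appears: for $n=2^t$, Theorem~\ref{thm:oddcriterion} forces the entries of row $k$ to be pairwise $2$-disjoint with cardinalities summing to the single power $2^{t-k}$, so exactly \emph{one} entry is nonzero and both hooks necessarily sit in that same entry. Your ``different entries, hence disjoint data'' case is therefore vacuous, and the entire proof rests on the local commutation case --- which you do not prove. That case is precisely the statement $(2^{t-k};0,\ell-k)\in\mathcal{T}$, i.e., the lemma itself for a smaller power of $2$; asserting it on the grounds that odd hooks are ``uniquely determined and independent'' is circular, and uniqueness of odd hooks cannot be the reason in any event, since it holds for every $n$ (Theorem~A of \cite{INOT}) while commutativity fails for most $n$ --- that failure is the whole content of Theorem~\ref{TheoremB}.

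What your outline is missing is the ingredient the paper actually uses: for $n=2^t$ the odd partitions are exactly the hook partitions $\big(2^t-b,1^b\big)$, $0\le b\le 2^t-1$ (their degrees are $\binom{2^t-1}{b}$, all odd by Lucas' theorem). On these, $f_k$ has a completely explicit description: it subtracts $2^k$ from the arm if $2^k\not\subseteq_2 b$ and from the leg if $2^k\subseteq_2 b$. Since distinct $k$, $\ell$ affect distinct binary digits of $b$, the operations visibly commute, with no case analysis on the quotient tower needed. Your reduction-to-a-single-entry observation is not wasted --- it is essentially the induction carried out in Lemma~\ref{Lemma 3} of the paper --- but that induction bottoms out exactly at the explicit hook computation, which must be supplied for the proof to close.
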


\begin{proof} If $0 \le b \le a$ are integers then the binomial coef\/f\/icient $\binom{a}{b}$ is odd if and only if $b\subseteq_2 a$, by Lucas' theorem. The odd partitions of $2^t$ are exactly the hook partitions $\big(2^t-b, 1^b\big)$, $0 \le b \le 2^t-1$, of degree $\binom{2^t-1}{b}$. Hence for $k \in \{0,1,\ldots,t-1\}$ we have
\begin{gather*}
f_k(\lambda)= \begin{cases}
\big(2^t-b-2^k, 1^b\big) & \text{if } 2^k \not\subseteq_2 b,\\
\big(2^t-b, 1^{b-2^k}\big) & \text{if } 2^k \subseteq_2 b.
\end{cases}
\end{gather*}
It follows that for any $k,\ell<t$ and odd partition $\lambda$ of $2^t$, we have $f_\ell f_k(\lambda)=f_k f_\ell(\lambda)$.
\end{proof}

\begin{lem}\label{Lemma 3} Let $n=2^t+m$ with $0 \le m < 2^t$. Suppose that $k$, $\ell$ satisfy $0 \leq k<\ell$ and $2^k+2^{\ell}\leq n$. If $m<2^k$ then $(n; k, \ell) \in \mathcal{T}$.
\end{lem}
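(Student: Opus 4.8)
The plan is to reduce the whole question to the single nonempty partition sitting in row $k$ of the $2$-quotient tower of $\lambda$, and then to invoke Lemma~\ref{Lemma 2}. First I would record the numerical consequences of the hypotheses. Since $m<2^k$ and $2^k+2^{\ell}\le n=2^t+m$, we get $2^{\ell}\le 2^t+m-2^k<2^t$, so $\ell<t$. Moreover $\big\lfloor n/2^k\big\rfloor=2^{t-k}$, again because $m<2^k$. Applying Theorem~\ref{thm:oddcriterion} at level $k$, the sizes $\big|\lambda^{(k)}_i\big|$ are pairwise $2$-disjoint sizes of odd partitions summing to the single power of two $2^{t-k}$; hence exactly one of them, say $\eta:=\lambda^{(k)}_{i_0}$, is nonempty, and $\eta$ is an odd partition with $|\eta|=2^{t-k}$, while $\lambda^{(k)}_i=\emptyset$ for all $i\ne i_0$.

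Next I would translate both maps into operations on $\eta$. By Lemma~\ref{hookremovedata}, removing the odd $2^k$-hook of $\lambda$ amounts to removing the odd $1$-hook from the unique nonempty row-$k$ partition $\eta$, leaving the core rows $\mathcal{C}^{(j)}_2(\lambda)$ ($j<k$) untouched; thus $f_k$ acts on $\lambda$ exactly as $f_0$ acts on $\eta$. Using the nesting of the quotient tower (since all row-$k$ partitions other than $\eta$ are empty, the partitions in row $\ell$ of $\lambda$ are precisely those in row $\ell-k$ of the tower of $\eta$) together with Lemma~\ref{hookremovedata} applied now at level $\ell$, removing the odd $2^{\ell}$-hook of $\lambda$ corresponds to removing the odd $2^{\ell-k}$-hook of $\eta$; that is, $f_{\ell}$ acts on $\lambda$ as $f_{\ell-k}$ acts on $\eta$, again fixing the rows $\mathcal{C}^{(j)}_2(\lambda)$, $j<k$. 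In either case the resulting partition still has a unique nonempty partition in row $k$, located at position $i_0$, so these identifications survive composition.

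Carrying out the two compositions, $f_kf_{\ell}(\lambda)$ and $f_{\ell}f_k(\lambda)$ share the same core rows $\mathcal{C}^{(j)}_2$, $j<k$, and their row-$k$ entry at $i_0$ equals $f_0f_{\ell-k}(\eta)$ and $f_{\ell-k}f_0(\eta)$ respectively, all other row-$k$ entries being empty. Since $\eta$ is an odd partition of the power of two $2^{t-k}$ and $0,\ell-k<t-k$, Lemma~\ref{Lemma 2} yields $f_0f_{\ell-k}(\eta)=f_{\ell-k}f_0(\eta)$. Hence the $k$-data of $f_kf_{\ell}(\lambda)$ and of $f_{\ell}f_k(\lambda)$ coincide, and by Remark~\ref{recover} the two partitions are equal, so $(n;k,\ell)\in\mathcal{T}$. (When $k=0$ the hypothesis $m<2^k$ forces $m=0$ and $n=2^t$, and the claim is immediate from Lemma~\ref{Lemma 2}.)

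The step I expect to be the main obstacle is the middle one: verifying rigorously that the odd $2^k$- and $2^{\ell}$-hook removals of $\lambda$ correspond \emph{exactly} to the odd $1$- and $2^{\ell-k}$-hook removals of $\eta$, and that the intermediate partitions $f_k(\lambda)$ and $f_{\ell}(\lambda)$ genuinely retain the single-nonempty-row-$k$ structure so that the correspondence is compatible with composition. This requires chaining Lemma~\ref{hookremovedata} across the two tower levels $k$ and $\ell$ and using that the removal of an even-length hook does not disturb the lower core rows.
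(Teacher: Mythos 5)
Your proof is correct, and it reaches the same two pillars as the paper's proof --- concentration of both odd hooks in the ``power-of-two component'' of $\lambda$, followed by an appeal to Lemma~\ref{Lemma 2} --- but it gets there by a genuinely different route. The paper argues by induction on $k$: for $k\ge 1$ it passes only one level down, to the $2$-quotient $Q_2(\lambda)=(\lambda_0,\lambda_1)$, uses Theorem~\ref{thm:oddcriterion} to see that $|\lambda_1|\le\big\lfloor \frac{m}{2}\big\rfloor<2^{k-1}<2^{\ell-1}$, so that both odd hook removals happen inside $\lambda_0$, and then applies the inductive hypothesis to $\lambda_0$; the base case $k=0$ forces $m=0$, $n=2^t$, which is Lemma~\ref{Lemma 2}. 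You instead jump directly to row $k$ of the quotient tower: from $m<2^k$ you get $\big\lfloor \frac{n}{2^k}\big\rfloor=2^{t-k}$, so by Theorem~\ref{thm:oddcriterion} exactly one entry $\eta$ of $\mathcal{Q}_2^{(k)}(\lambda)$ is nonempty, with $|\eta|=2^{t-k}$, and both $f_k$ and $f_\ell$ are realized as $f_0$ and $f_{\ell-k}$ acting on $\eta$, after which Lemma~\ref{Lemma 2} applied to $\eta$ (legitimate, since $\ell-k\le t-k-1$) finishes via Remark~\ref{recover}. In effect your argument is the unrolled, non-inductive form of the paper's induction, and the trade-off is exactly the one you flag at the end: the paper only ever needs the one-level identity $Q_2(f_j(\lambda))=(f_{j-1}(\lambda_0),\lambda_1)$, so each verification is short, whereas you must chain Lemma~\ref{hookremovedata} across $k$ levels and check, via Theorem~\ref{thm:oddcriterion} at level $k$, that a candidate for $f_\ell(\lambda)$ is odd if and only if the corresponding candidate obtained from $\eta$ by removing a $2^{\ell-k}$-hook is odd --- which does hold, because the core rows $\mathcal{C}_2^{(j)}$, $j<k$, are untouched and the $2$-disjointness condition in Theorem~\ref{thm:oddcriterion} is vacuous when all other row-$k$ entries are empty. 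What your formulation buys is transparency: it exhibits the single partition $\eta$ of size $2^{t-k}$ where the commutation actually takes place, and it makes explicit the inequality $\ell<t$, which the paper's proof uses only implicitly.
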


\begin{proof} We use induction on $k\ge 0$. For $k=0$ we have $m=0$ and the claim follows from Lemma~\ref{Lemma 2}. Suppose that $k \ge 1$ and that the claim has been proved up to $k-1$. Let $\lambda \vdash_o n$. Odd hooks of length $2^k$ and $2^{\ell}$ in $\lambda$ correspond to odd hooks of length $2^{k-1}$ and $2^{\ell-1}$ in the $2$-quotient $Q_2(\lambda)=(\lambda_0, \lambda_1)$ of $\lambda$. From Theorem~\ref{thm:oddcriterion} we deduce that $|\lambda_0|$ and $|\lambda_1|$ are 2-disjoint binary subsums of $\big\lfloor \frac{n}{2}\big\rfloor$, so one of them contains $2^{t-1}$, say $|\lambda_0|$; then $|\lambda_1| \le \big\lfloor \frac{m}{2}\big\rfloor < 2^{k-1}<2^{\ell-1}$. Thus the odd $2^{k-1}$-hook in $Q_2(\lambda)$ has to be in $\lambda_0$. Therefore
\begin{gather*}
Q_2(f_k(\lambda))=(f_ {k-1}(\lambda_0), \lambda_1).
\end{gather*}
Applying $f_{\ell}$, the odd $2^{\ell-1}$-hook cannot be in $\lambda_1$, hence
\begin{gather*}
Q_2(f_{\ell}f_k(\lambda))=(f_{\ell -1} f_ {k-1}(\lambda_0), \lambda_1)).
\end{gather*}	
In particular, we know that $|\lambda_0| \ge 2^{\ell -1}+2^{k-1}$. Also $|\lambda_0|+|\lambda_1|=\big\lfloor \frac{n}{2} \big\rfloor=2^{t-1}+\big\lfloor \frac{m}{2} \big\rfloor$. We have already seen that $2^{t-1}$ is the largest binary digit of $|\lambda_0|$; furthermore $|\lambda_0|-2^{t-1}$ is a~binary subsum of $\big\lfloor \frac{m}{2}\big\rfloor <2^{k-1}$. We may therefore apply the inductive hypothesis to $\lambda_0$ to get $f_{\ell -1} f_ {k-1}(\lambda_0)=f_{k-1}f_{\ell-1}(\lambda_0)$. This implies that $Q_2(f_k f_{\ell}(\lambda))= Q_2(f_{\ell} f_k(\lambda))$ and thus $f_k f_{\ell}(\lambda)=f_{\ell}f_k(\lambda)$.
\end{proof}

Lemmas \ref{Lemma 1} and~\ref{Lemma 3} show that the \textit{only if} part of the theorem is true. We now turn to the \textit{if} part. We start by proving the statement for $k=0$ and use this as part of an inductive argument.

\begin{lem}\label{Lemma 5} Let $n=2^t+m$ with $0 < m < 2^t$. If $0<\ell<t$ then $(n; 0, \ell) \in \mathcal{F}$, with the exception of $(6;0,1)$.
\end{lem}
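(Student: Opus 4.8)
The plan is to prove membership in $\mathcal{F}$ constructively: for every admissible pair other than $(6;0,1)$ I will exhibit a single odd partition $\lambda\vdash_o n$ on which the two composites disagree, i.e.\ $f_0f_\ell(\lambda)\neq f_\ell f_0(\lambda)$. Throughout I would use the combinatorial model of Theorem~\ref{thm:oddcriterion} together with Lemma~\ref{hookremovedata}: for $\ell\ge 1$ the map $f_\ell$ acts on row $\ell$ of the quotient tower by deleting one node from exactly one part $\lambda^{(\ell)}_i$, leaving the rest of the $\ell$-data fixed, while $f_0$ deletes the unique odd $1$-hook of $\lambda$. The feature that drives the whole lemma is that $f_0$ does \emph{not} descend to the $2$-quotient: in the abacus picture a $1$-hook removal transfers a bead between the two runners, so it flips the $2$-core and alters \emph{both} constituents of $Q_2(\lambda)$ at once. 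It is precisely this transfer, happening before or after the clean level-$\ell$ action of $f_\ell$, that breaks commutativity.

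The prototype is $(5;0,1)$. Taking $\lambda=(3,2)$ one computes directly $f_1(\lambda)=(3)$ and $f_0(3)=(2)$, whereas $f_0(\lambda)=(3,1)$ and $f_1(3,1)=(1,1)$; hence $f_0f_1(\lambda)=(2)\neq(1,1)=f_1f_0(\lambda)$. The mechanism is visible through $Q_2$: here $Q_2(\lambda)$ has parts $(1,1)$ and the empty partition, $f_1$ removes a node from the nonempty part, and the intervening bead transfer performed by $f_0$ changes which runner the node removal meets, so the two orders deposit the transferred bead on opposite sides and the results differ in their $2$-core. My first step would be to isolate this phenomenon at $\ell=1$ in general: choose $\lambda$ with $Q_2(\lambda)=(\lambda_0,\lambda_1)$ arranged so that the odd $1$-hook transfer changes the size parity deciding which part $f_1$ acts on, forcing $f_0f_1(\lambda)$ and $f_1f_0(\lambda)$ to differ in a single slot of the core tower.

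To reach all $(n,\ell)$ I would lift this prototype one level at a time. Via Lemma~\ref{hookremovedata}, placing the construction inside the part $\lambda^{(\ell)}_i$ occupying the active slot of row $\ell$ turns the level-$\ell$ problem into a level-$1$ problem for that part, which the prototype handles. Here the hypotheses split naturally: when $m\ge 2^\ell$ the quotient row $\mathcal{Q}^{(\ell)}_2(\lambda)$ carries genuine mass below the top bit and one has freedom to seat the discrepancy safely below row $t$; when $m<2^\ell$ one has $d(n,\ell)=t-\ell>0$, exactly one part of $\mathcal{Q}^{(\ell)}_2(\lambda)$ has odd size, and $f_\ell$ \emph{borrows from the top}, so I would build $\lambda$ directly in this regime. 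Because $m\ge 1$ forces $\nu_2(n)<t$, the carry produced by $f_0$ always stays in rows below $t$, and by choosing $\lambda$ so that the $f_\ell$-carry also stays below row $t$ one obtains a reduction of $n$ (removal of the top $2^t$-hook, Lemma~\ref{Lemma 4}) that organizes the case analysis; in every case the target is the same, namely two odd partitions of $n-1-2^\ell$ whose core towers differ in exactly one slot.

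Finally the exceptional pair $(6;0,1)$ must be treated by hand: there are only eight odd partitions of $6$, and a direct check (for instance on $(4,2)$ and on $(5,1)$, where both composites equal $(3)$) shows $f_0f_1=f_1f_0$ throughout, so $(6;0,1)\in\mathcal{T}$; one also verifies that this is the unique place where the general construction degenerates. The step I expect to be the main obstacle is precisely the bookkeeping for $f_0$: since a $1$-hook removal is invisible to the $2$-quotient and instead acts as a carry on the $2$-core tower—deleting the $1$ in row $\nu_2(|\cdot|)$ and filling the rows beneath it—tracking its interaction with the level-$\ell$ action of $f_\ell$ through both orders, uniformly across the regimes $m\ge 2^\ell$ and $m<2^\ell$, is delicate, all the more so because the discrepancy never appears on coarse invariants such as size or $2$-core and always lives in the fine slot structure of the core tower.
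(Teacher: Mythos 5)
Your prototype computation at $(5;0,1)$ (with $\lambda=(3,2)$) and your finite check of the exception $(6;0,1)$ are both correct, and you have correctly identified the mechanism: $f_0$ acts as a carry on the $2$-core tower and does not localize to any row of the quotient tower, while $f_\ell$ deletes a single node in row $\ell$. But the proposal stops exactly where the proof has to start. For general $n=2^t+m$ with $0<m<2^t$ and $0<\ell<t$ you never exhibit an odd partition $\lambda\vdash_o n$ with $f_0f_\ell(\lambda)\neq f_\ell f_0(\lambda)$, never verify oddness of a candidate, and never compute either composite. Worse, the one general mechanism you do propose --- ``lift the prototype one level at a time'' by seating it in a slot $\lambda^{(\ell)}_i$ of row $\ell$ via Lemma~\ref{hookremovedata} --- cannot work for this lemma: passing through the $2$-quotient increments \emph{both} indices (this is exactly Lemma~\ref{Lemma 6}, which sends $(n;k,\ell)$ to $(2n;k+1,\ell+1)$), so it can never produce a counterexample with $k=0$; and there is no dictionary translating the global carry action of $f_0$ on $\lambda$ into an operation inside the single part $\lambda^{(\ell)}_i$ --- a point you concede yourself when you say $f_0$ ``does not descend to the $2$-quotient.'' You then name the tracking of this interaction as ``the main obstacle,'' but naming the obstacle is not overcoming it; as written, the argument covers $n=5$, $n=6$, and nothing else.

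For comparison, the paper resolves precisely this obstacle by brute concreteness: after checking $n\le 8$ directly, it splits into the cases $2^\ell<m$, $m<2^\ell$, $m=2^\ell$ and writes down an explicit partition in each --- $\big(m,m,1^{2^t-m}\big)$, $\big(n-2^\ell,m+1,1^{2^\ell-m-1}\big)$, $\big(2^t,2^\ell-1,1\big)$ (resp.\ $\big(2^t-2,2,2\big)$ when $\ell=1$). For such near-hook shapes, Lemma~\ref{Lemma 4} (oddness $=$ a unique $2^t$-hook whose removal leaves an odd partition) eliminates every candidate image of $f_0$ and of $f_\ell$ except one, by a one-line hook-length inspection; so both composites can be computed exactly and seen to differ. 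To salvage your plan you would need to replace the lifting step with constructions of this kind (or a genuinely new argument controlling the carry through both orders), and that construction is the entire substance of the lemma.
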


\begin{proof} The result is easily checked for $n \le 8$, which includes the exception $(6;0,1)$. So we assume that $t \ge 3$.

{\it Case 1}: $2^{\ell} < m$. Then $m\ge 3$, since $\ell >0$. Consider the partition $\lambda=\big(m,m,1^{a}\big) \vdash n$ where $a=n-2m=2^t-m$. The (1,1)-hook length of $\lambda$ is $2^t+1$. The (2,1)-hook length of $\lambda$ is~$2^t$. Removing the (2,1)-hook hook we get the odd partition~$(m)$, so $\lambda$ is odd, by Lemma~\ref{Lemma 4}. We claim that
\begin{gather*}
f_0(\lambda)=\big(m,m,1^{a-1}\big).
\end{gather*}
Indeed we cannot have $f_0(\lambda)=\big(m,m-1,1^a\big)$ because this partition does not have a hook of length $2^t$, and thus it is not odd. Now
\begin{gather*}
f_{\ell}(f_0(\lambda))=f_{\ell}\big(m,m,1^{a-1}\big)=\big(m,m-2^{\ell},1^{a-1}\big)
\end{gather*}
since $\big(m,m,1^{a-1-2^{\ell}}\big)$ and $\big(m-1,m-2^{\ell}+1,1^{a-1}\big)$ both do not have a hook of length $2^t$ and thus are not odd (again by Lemma~\ref{Lemma 4}).

On the other hand,
\begin{gather*}
f_{\ell}(\lambda)=\big(m-1,m-\big(2^{\ell}-1\big),1^{a}\big).
\end{gather*}
Indeed, the other candidates for $f_{\ell}(\lambda)$, which are $\big(m,m-2^{\ell},1^a\big)$ and $\big(m,m,1^{a-2^{\ell}}\big)$, do not have hooks of length $2^t$. Then
\begin{gather*}
f_0(f_{\ell}(\lambda))=f_0\big(m-1,m-\big(2^{\ell}-1\big),1^{a}\big)=\big(m-1, m-2^{\ell},1^a\big).
\end{gather*}
This follows (again) by observing that all the other partitions of $n-2^\ell-1$ obtained from $\big(m-1,m-\big(2^{\ell}-1\big),1^{a}\big)$ by removing a node do not have hooks of length $2^t$. Thus $f_0(f_{\ell}(\lambda)) \neq f_{\ell}(f_0(\lambda))$.

{\it Case 2}: $m < 2^{\ell}$. Consider the partition $\lambda=\big(n-2^{\ell}, m+1,1^a\big)$, where $a=2^{\ell}-(m+1)$. Note that $n-2^{\ell} \ge m+1$ since $\ell<t$ by assumption, and that $a \ge 0$. The (1,1)-hook length of $\lambda$ is $n-m=2^t$. Removing this hook we get the odd partition $(m)$, so $\lambda$ is odd. The (2,1)-hook length of $\lambda$ is $2^{\ell}$. Now
\begin{gather*}
f_0(\lambda)=\big(n-2^{\ell}, m,1^a\big)
\end{gather*}
since the other candidates do not have hooks of length $2^t$. Then
\begin{gather*}
f_{\ell}(f_0(\lambda))=f_{\ell}\big(n-2^{\ell}, m,1^a\big)=\mu,
\end{gather*}
where $\mu$ is obtained from $f_0(\lambda)$ by removing a~$2^{\ell}$-hook in the f\/irst row. (There are only hooks of length $<2^{\ell}$ in the other rows.) In fact, $\mu=\big(n-2^{\ell+1},m, 1^a\big)$ since $n-2^{\ell+1}\ge n-2^{t}=m$. Thus $f_{\ell}(f_0(\lambda))$ has at least 2 parts.
On the other hand
\begin{gather*}
f_{\ell}(\lambda)=\big(n-2^{\ell}\big)
\end{gather*}
since this odd partition is obtained from the odd partition $\lambda$ by removing a~$2^{\ell}$-hook (the one in~$(2,1)$). It follows that
\begin{gather*}
f_0(f_{\ell}(\lambda))=\big(n-2^{\ell}-1\big)
\end{gather*}
and again $f_0(f_{\ell}(\lambda)) \neq f_{\ell}(f_0(\lambda))$.

{\it Case 3}: $m=2^{\ell}$. Then $n=2^t+2^{\ell}$. If $\ell\geq 2$ then choose $\lambda=\big(2^t,2^{\ell}-1,1\big)$. The $(1,2)$-hook length of $\lambda$ is $2^t$; thus $\lambda$ is an odd partition since removing this $2^t$-hook gives an odd partition $\big(2^{\ell}-2,1,1\big)$ of $2^{\ell}$. We have $f_0(\lambda)=\big(2^t, 2^{\ell}-2,1\big)$ since the other candidates are not odd. Then
\begin{gather*}
f_{\ell}(f_0(\lambda))=\big(2^t-2^{\ell},2^{\ell}-2,1\big).
\end{gather*}
The $(2,1)$-hook length of $\lambda$ is $2^{\ell}$, so $f_{\ell}(\lambda)=\big(2^t\big)$ and
\begin{gather*}
f_0(f_{\ell}(\lambda))=\big(2^t-1\big),
\end{gather*}
showing $f_0(f_{\ell}(\lambda)) \neq f_{\ell}(f_0(\lambda))$.

On the other hand, if $\ell=1$ then choose $\lambda=\big(2^t-2,2,2\big)\vdash_o 2^t+2=n$. Since $t\geq 3$, it is now easy to show that $f_1(f_0(\lambda))=\big(2^t-4,2,1\big)$. On the other hand we see that $f_0(f_1(\lambda))$ is a~hook partition of $2^t-1=n-3$ and therefore is not equal to $f_1(f_0(\lambda))$.
\end{proof}

\begin{lem}\label{Lemma 6} If $(n; k, \ell) \in \mathcal{F}$ then also $(2n; k+1, \ell+1) \in \mathcal{F}$ and $(2n+1; k+1, \ell+1) \in \mathcal{F}$.
\end{lem}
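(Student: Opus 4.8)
The plan is to lift a witness for non-commutativity one step up the $2$-quotient tower. Recall that $(n;k,\ell)\in\mathcal{F}$ means there is some $\lambda\vdash_o n$ with $f_kf_\ell(\lambda)\neq f_\ell f_k(\lambda)$; fix such a $\lambda$. I would build an odd partition $\Lambda$ of $2n$ (and separately an odd partition $\Lambda'$ of $2n+1$) whose $2$-quotient is $(\lambda,\emptyset)$, arranged so that applying $f_{k+1}$ and $f_{\ell+1}$ upstairs mirrors applying $f_k$ and $f_\ell$ to $\lambda$ in the first coordinate of the $2$-quotient. This is exactly the reduction already used in the proof of Lemma~\ref{Lemma 3}, run in the opposite direction.

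First I would construct the lifts. Let $\Lambda$ be the unique partition with $C_2(\Lambda)=\emptyset$ and $Q_2(\Lambda)=(\lambda,\emptyset)$, and let $\Lambda'$ be the unique partition with $C_2(\Lambda')=(1)$ and $Q_2(\Lambda')=(\lambda,\emptyset)$; these exist and are unique since a partition is determined by its $2$-core and $2$-quotient. Counting nodes gives $|\Lambda|=0+2(|\lambda|+0)=2n$ and $|\Lambda'|=1+2n=2n+1$. Both are odd by Theorem~\ref{thm:oddcriterion} applied with $k=1$: the entries $\lambda$ and $\emptyset$ of the first quotient row are odd, their sizes $n$ and $0$ are trivially $2$-disjoint, and the condition $c_2^{(0)}\le 1$ holds because the $2$-core has size $0$ or $1$.

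Next I would analyse the action of $f_{k+1}$ and $f_{\ell+1}$. By the $e=2$ hook correspondence (\cite[Theorem~3.3]{OlssonBook}, as recalled before Lemma~\ref{hookremovedata}), removing a $2^{j+1}$-hook from $\Lambda$ amounts to removing a $2^{j}$-hook from one of the partitions $\lambda,\emptyset$ of $Q_2(\Lambda)$ while fixing the $2$-core. Since $\emptyset$ has no hook, such a removal must take place in $\lambda$; moreover, by Remark~\ref{rem:oddhook} the odd $2^{j+1}$-hook of $\Lambda$ corresponds to the odd $2^{j}$-hook of $\lambda$, because $(\mu,\emptyset)$ is odd precisely when $\mu$ is. Hence
\begin{gather*}
Q_2\big(f_{j+1}(\Lambda)\big)=\big(f_j(\lambda),\emptyset\big)\qquad\text{for } j\in\{k,\ell\}.
\end{gather*}
Since the second quotient entry stays empty throughout, the same reasoning applies at the next step, giving
\begin{gather*}
Q_2\big(f_{k+1}f_{\ell+1}(\Lambda)\big)=\big(f_kf_\ell(\lambda),\emptyset\big),\qquad
Q_2\big(f_{\ell+1}f_{k+1}(\Lambda)\big)=\big(f_\ell f_k(\lambda),\emptyset\big).
\end{gather*}
The same equalities hold verbatim for $\Lambda'$, since the removed hooks all have even length and so never disturb the $2$-core $(1)$. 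As $f_kf_\ell(\lambda)\neq f_\ell f_k(\lambda)$, the two displayed $2$-quotients differ, whence $f_{k+1}f_{\ell+1}(\Lambda)\neq f_{\ell+1}f_{k+1}(\Lambda)$ and likewise for $\Lambda'$; this yields $(2n;k+1,\ell+1)\in\mathcal{F}$ and $(2n+1;k+1,\ell+1)\in\mathcal{F}$.

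The one point requiring care is the claim in the third paragraph that the odd $2^{j+1}$-hook of the lift corresponds to the odd $2^j$-hook of $\lambda$, i.e., that the unique odd-hook removal upstairs projects to the unique odd-hook removal downstairs. This is exactly where the trivial second quotient component $\emptyset$ is used: it forces every relevant removal into the $\lambda$-slot, and the oddness criterion for $(\mu,\emptyset)$ collapses to the oddness of $\mu$, so that the selection of the odd hook is transported faithfully. The remaining degree inequalities ($2^{k+1}+2^{\ell+1}=2(2^k+2^\ell)\le 2n\le 2n+1$, guaranteeing that all four maps are defined) are immediate.
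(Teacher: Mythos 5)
Your proposal is correct and follows essentially the same route as the paper's own proof: lift the witness $\lambda\vdash_o n$ to an odd partition of $2n$ (resp.\ $2n+1$) with $2$-quotient $(\lambda,\emptyset)$ and $2$-core $\emptyset$ (resp.\ $(1)$), and observe that the odd-hook removals upstairs track $f_k$, $f_\ell$ in the first quotient component. You simply spell out the details (oddness of the lift via Theorem~\ref{thm:oddcriterion}, the hook correspondence, and why the odd hook is transported faithfully) that the paper leaves implicit.
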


\begin{proof} Let the odd partition $\mu$ of $n$ satisfy $f_kf_{\ell}(\mu) \neq f_{\ell}f_k(\mu)$. Let $\lambda$ be a partition of $2n$ or $2n+1$ having 2-quotient $Q_2(\lambda)=(\mu, (0))$. Then $\lambda$ is odd, by Theorem~\ref{thm:oddcriterion}. We have
\begin{gather*}
Q_2(f_{k+1}f_{\ell +1}(\lambda))=(f_kf_{\ell}(\mu),(0)) \neq (f_{\ell}f_k(\mu),(0)) =Q_2(f_{\ell+1}f_{k +1}(\lambda)),
\end{gather*}
so that
$f_{k+1}f_{\ell +1}(\lambda) \neq f_{\ell+1}f_{k +1}(\lambda)$.
\end{proof}

We are now ready to conclude this section with the proof of Theorem~\ref{TheoremB}.

\begin{proof}[Proof of Theorem~\ref{them:Aref}] The \textit{only if} part follows from Lemmas~\ref{Lemma 1} and~\ref{Lemma 3}. To prove the \textit{if} part we use induction on $k \ge 0$. If $k=0$, then the statement follows from Lemma~\ref{Lemma 5}. Let $k>1$ and suppose that the assertion is true up to and including $k-1$. To show that $(n; k, \ell) \in \mathcal{F}$ it suf\/f\/ices to prove $(\lfloor \frac{n}{2} \rfloor; k-1,\ell-1) \in \mathcal{F}$, by Lemma~\ref{Lemma 6}. We are assuming $n=2^t+m$, $0 \le m < 2^t$, $0 \leq k<\ell \leq t$ and $2^k+2^{\ell}\leq n$.
This implies $\big\lfloor \frac{n}{2} \big\rfloor =2^{t-1}+ \big\lfloor \frac{m}{2} \big\rfloor$, $0 \le \big\lfloor \frac{m}{2} \big\rfloor < 2^{t-1} $ and $2^{k-1}+2^{\ell-1}\leq \big\lfloor \frac{n}{2} \big\rfloor $. We may apply the inductive hypothesis to get $\big(\big\lfloor \frac{n}{2} \big\rfloor; k-1,\ell-1\big) \in \mathcal{F}$, and then $(n; k, \ell) \in \mathcal{F}$ except when $\big(\big\lfloor \frac{n}{2} \big\rfloor; k-1,\ell-1\big)=(6;0,1)$. In that case we are considering (12;1,2) or (13;1,2) which are both in~$ \mathcal{F}$, by direct computation (consider for example $(6,4,2)\vdash_o 12$ and $(6,4,3)\vdash_o 13$, respectively).
\end{proof}

\subsection*{Acknowledgements}
We thank Gabriel Navarro for providing some useful tables which helped us f\/ind the patterns explained by the results of this paper. The second author is grateful to Trinity Hall, University of Cambridge, for funding his research. Thanks also go to the referee for helpful comments on a previous version of the article.

\pdfbookmark[1]{References}{ref}
\LastPageEnding

\end{document}